\documentclass[11pt,dvipsnames,a4paper]{article}
\usepackage[affil-it]{authblk}
\usepackage[utf8]{inputenc}
\usepackage{amsmath, amssymb, mathrsfs, amsthm, amsfonts}
\usepackage{latexsym}
\usepackage{hyperref}
\usepackage{a4wide}
\usepackage{color,xcolor}
\usepackage{graphicx}
\usepackage{fancyhdr}

\newtheorem{theorem}{Theorem}[section]
\newtheorem{corollary}[theorem]{Corollary}
\newtheorem{lemma}[theorem]{Lemma}
\newtheorem{proposition}[theorem]{Proposition}

\theoremstyle{definition}

\theoremstyle{remark}
\newtheorem*{example}{Example}

\pagestyle{fancy}
\fancyhf{}
\lhead{\color{MidnightBlue} H. Randriamaro}
\rhead{\color{MidnightBlue} VARCHENKO DETERMINANT}
\cfoot{Page \thepage { }of 12}

\title{The Varchenko Determinant of an Oriented Matroid}

\author{Hery Randriamaro
\thanks{This research was funded by my mother \\
Lot II B 32 bis Faravohitra, 101 Antananarivo, Madagascar \\
e-mail: \texttt{hery.randriamaro@outlook.com}}}

\begin{document}

\thispagestyle{empty}

\noindent {\color{MidnightBlue} \rule{\linewidth}{2pt}}

\vspace*{15pt}

\noindent \textsf{\huge The Varchenko Determinant of an Oriented Matroid}

\vspace*{25pt}

\noindent \textbf{\Large Hery Randriamaro${}^1$} 

\vspace*{10pt}

\noindent ${}^1$\: This research was funded by the mother of the author \\ \textsc{Lot II B 32 bis Faravohitra, 101 Antananarivo, Madagascar} \\ \texttt{hery.randriamaro@outlook.com} 

\vspace*{25pt}

\noindent \textsc{\large Abstract} \\ 
Varchenko introduced in 1993 a distance function on the chambers of a hyperplane arrangement that gave rise
to a determinant whose entry in position $(C, D)$
is the distance between the chambers $C$ and $D$, and computed that determinant. In 2017, Aguiar and Mahajan provided a generalization of that distance function, and computed the corresponding determinant. This article extends their distance function to the topes of an oriented matroid, and computes the determinant thus defined. Oriented matroids have the nice property to be abstractions of some mathematical structures including hyperplane and sphere arrangements, polytopes, directed graphs, and even chirality in molecular chemistry. Independently and with another method, Hochstättler and Welker also computed in 2019 the same determinant. 

\vspace*{10pt}

\noindent \textsc{Keywords}: Pseudohyperplane Arrangement, Distance, Determinant 

\vspace*{10pt}

\noindent \textsc{MSC Number}: 05B20, 05E10, 15A15, 52C35

\vspace*{10pt}

\noindent {\color{MidnightBlue} \rule{\linewidth}{2pt}}

\vspace*{10pt}

\section{Introduction}

\noindent Varchenko introduced and computed a determinant defined from hyperplane arrangements \cite[§~1]{Va}. That determinant already appeared earlier in the implicit form of that of a symmetric bilinear form on a Verma module over a $\mathbb{C}$-algebra \cite[§~1]{ScVa}. It also plays a key role to prove the realizability of variant models of quon algebras in quantum mechanics \cite[Th.~4.2]{Ra}, \cite[Prop.~2.1]{Ra2}. Since then, there have been some attempts to provide cleaner proofs of the result of Varchenko, and even more refined determinants but still defined from hyperplane arrangements \cite[§~8]{AgMa}, \cite[Th.~4.5]{Ge}, \cite[Th.~1.3]{Ra3}. Besides, the associated matrix has been investigated from several angles. For hyperplane arrangements in semigeneral position, Gao and Zhang computed its diagonal form \cite[Th.~2]{GaZh}, and for braid arrangements, Denham and Hanlon studied its Smith normal form \cite[Th.~3.3]{DeHa} while Hanlon and Stanley computed its nullspace \cite[Th.~3.3]{HaSt}. \emph{This article studies the extension of the Varchenko determinant to oriented matroids}. Folkman began to work on oriented matroids by 1967, but unfortunately died before publishing his theory. Lawrence completed that theory, and published the results in a joint paper with Folkman in 1978 \cite{FoLa}. Independently, Bland and Las Vergnas developed the same notion of oriented matroids \cite{BlLas}. Those latter are abstractions for different mathematical objects such as directed graphs \cite[§~1.1]{BjLaStWhZi} and convex polytopes \cite[§~9]{BjLaStWhZi}.

\smallskip

\noindent Let $\mathbb{E} := \{+,-,0\}$, and for $n \in \mathbb{N}^*$ denote by $u_i$ the $i^{\text{th}}$ component of a covector $u$ in $\mathbb{E}^n$. Equip $\mathbb{E}^n$ with the binary operation $\circ$ as follows: if $u, v \in \mathbb{E}^n$, $u \circ v$ is the vector $w$ of $\mathbb{E}^n$ such that $\displaystyle w_i := \begin{cases} u_i & \text{if}\ u_i \neq 0, \\ v_i & \text{otherwise} \end{cases}$. Moreover, define the function $\mathrm{S}: \mathbb{E}^n \times \mathbb{E}^n \rightarrow 2^{[n]}$ by $\mathrm{S}(u,v) := \big\{i \in [n]\ \big|\ u_i = -v_i \neq 0\big\}$. We use the covector axioms for the definition of an oriented matroid, that is, an \textbf{oriented matroid} is a subset $\mathsf{M}$ of $\mathbb{E}^n$ satisfying the conditions
\begin{itemize}
	\item $(0, \dots, 0) \in \mathsf{M}$,
	\item if $u \in \mathsf{M}$, then $-u \in \mathsf{M}$,
	\item if $u,v \in \mathsf{M}$, then $u \circ v \in \mathsf{M}$,
	\item for each pair $u,v \in \mathsf{M}$ and every $j \in \mathrm{S}(u,v)$, there exists $w \in \mathsf{M}$ such that
	$$w_j = 0 \quad \text{and} \quad \forall i \in [n] \setminus \mathrm{S}(u,v):\ w_i = (u \circ v)_i.$$
\end{itemize}

\noindent An integer $i \in [n]$ is a \textbf{loop} for an oriented matroid $\mathsf{M}$ if $u_i = 0$ for every $u \in \mathsf{M}$. \emph{For simplicity, we assume that all oriented matroids considered in this article are loop-free}. The covector set $\mathsf{M}$ forms a poset with the partial order $\leq$ defined by $u \leq v \, \Longleftrightarrow \, \forall i \in [n]:\, u_i \in \{0, v_i\}$. The \textbf{rank} of an oriented matroid $\mathsf{M}$ is the length of a maximal chain in $(\mathsf{M}, \leq)$. A covector $u$ of $\mathsf{M}$ is called a \textbf{tope} if $u_i \neq 0$ for every $i \in [n]$. Denote the set formed by the topes of $\mathsf{M}$ by $T_{\mathsf{M}}$. The \textbf{topal fiber} of $\mathsf{M}$ relative to a set $I \subseteq [n]$ and a covector $u \in \mathsf{M}$ such that $u_i \neq 0$ for every $i \in [n] \setminus I$ is the set of covectors $\mathsf{M}_{I,u} := \{v \in \mathsf{M}\ |\ \forall i \in [n] \setminus I:\, v_i = u_i\}$. Let $R_n$ be the polynomial ring $\mathbb{Z}\big[a_i^+, a_i^-\ \big|\ i \in [n]\big]$ with variables $a_i^+, a_i^-$. Define the \textbf{Aguiar-Mahajan distance} $\mathrm{v}: T_{\mathsf{M}} \cap \mathsf{M}_{I,u} \times T_{\mathsf{M}} \cap \mathsf{M}_{I,u} \rightarrow R_n$ on the topes in $\mathsf{M}_{I,u}$ by
$$\mathrm{v}(v,v) = 1 \quad \text{and} \quad \mathrm{v}(v,w) = \prod_{i \in \mathrm{S}(v,w) \cap I} a_i^{v_i}\ \text{if}\ v \neq w.$$
Remark that the name distance is kept as the authors originally called it so for central hyperplane arrangements $\cite[§~8.1]{AgMa}$. The \textbf{Varchenko determinant} of a topal fiber $\mathsf{M}_{I,u}$ is $$V_{\mathsf{M}_{I,u}} := \big|\mathrm{v}(v,w)\big|_{v,w \in T_{\mathsf{M}} \cap \mathsf{M}_{I,u}}.$$ It naturally becomes the Varchenko determinant of the oriented matroid $\mathsf{M}$ if $I = [n]$. The \textbf{weight} of a covector $u \in \mathsf{M} \setminus T_{\mathsf{M}}$ is the monomial $\displaystyle \mathrm{b}_u := \prod_{\substack{i \in [n] \\ u_i = 0}} a_i^+ a_i^-$. For $i \in [n]$, define the $i^{\text{th}}$ \textbf{boundary} $\partial_i v$ of a tope $v$ of $\mathsf{M}$ by the set $\{u \in \mathsf{M}\ |\ u \leq v,\, u_i = 0\}$. The \textbf{multiplicity} of a covector $u \in \mathsf{M} \setminus T_{\mathsf{M}}$ such that $u_i = 0$ is the integer $\displaystyle \beta_u := \frac{\#\{t \in T_{\mathsf{M}}\ |\ \max \partial_i t = u\}}{2}$. We will see in Theorem~\ref{V} that $\beta_u$ is independent of the chosen $i$. \emph{We can now state the main result of this article}.

\begin{theorem}  \label{ThMain}
	Let $\mathsf{M}$ be an oriented matroid in $\mathbb{E}^n$, $I$ a subset of	$[n]$, and $u$ a covector of $\mathsf{M}$ such that $u_i \neq 0$ for every $i \in [n] \setminus I$. Then, the Varchenko determinant of the topal fiber $\mathsf{M}_{I,u}$ is $$V_{\mathsf{M}_{I,u}} = \prod_{v \in \mathsf{\mathsf{M}_{I,u}} \setminus T_{\mathsf{M}}} (1 - \mathrm{b}_v)^{\beta_v}.$$
\end{theorem}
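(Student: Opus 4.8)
\emph{Proof strategy.} The plan is to induct on $|I|$. When $I=\varnothing$, the hypothesis $u_i\neq0$ for $i\notin I$ forces $u$ to be a tope, so $\mathsf{M}_{\varnothing,u}=\{u\}$, the matrix $\big(\mathrm{v}(v,w)\big)$ is the $1\times1$ matrix $(1)$, and the right-hand product is empty; both sides equal $1$.

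For the inductive step I would fix $k\in I$, set $I':=I\setminus\{k\}$, and partition the topes of $\mathsf{M}_{I,u}$ according to the value $+$ or $-$ of their $k$-th coordinate. Each block is the tope set of a topal fiber of $\mathsf{M}$ relative to $I'$, say $\mathsf{M}_{I',u^{+}}$ and $\mathsf{M}_{I',u^{-}}$ (with the convention that such a fiber is empty when no tope of $\mathsf{M}_{I,u}$ takes the corresponding sign); and since $k\notin\mathrm{S}(v,w)$ whenever $v,w$ lie on the same side of the $k$-th pseudohyperplane, the submatrix of $\big(\mathrm{v}(v,w)\big)_{v,w\in T_{\mathsf{M}}\cap\mathsf{M}_{I,u}}$ indexed by the topes of $\mathsf{M}_{I',u^{\pm}}$ has determinant $V_{\mathsf{M}_{I',u^{\pm}}}$. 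The engine of the argument is the factorisation
$$V_{\mathsf{M}_{I,u}}\;=\;V_{\mathsf{M}_{I',u^{+}}}\cdot V_{\mathsf{M}_{I',u^{-}}}\cdot\prod_{\substack{w\in\mathsf{M}_{I,u}\setminus T_{\mathsf{M}}\\ w_k=0}}\big(1-\mathrm{b}_w\big)^{\beta_w},$$
where each $\beta_w$ is computed using $i=k$. Granting this, the inductive hypothesis applies to the fibers $\mathsf{M}_{I',u^{\pm}}$ (they still satisfy the running hypothesis, $u^{\pm}_i\neq0$ for $i\notin I'$), and since a non-tope covector of $\mathsf{M}_{I,u}$ has $k$-th coordinate $+$, $-$, or $0$ — the first two classes being exactly the non-tope covectors of $\mathsf{M}_{I',u^{+}}$ and of $\mathsf{M}_{I',u^{-}}$ — the three products on the right combine into $\prod_{v\in\mathsf{M}_{I,u}\setminus T_{\mathsf{M}}}(1-\mathrm{b}_v)^{\beta_v}$, which closes the induction.

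To prove the factorisation I would run column and row operations on $\big(\mathrm{v}(v,w)\big)$ steered by the boundary map $\partial_k$. Call a tope $t$ of the fiber $k$-\emph{simple} if the sign vector $\sigma_k t$ obtained from $t$ by flipping its $k$-th coordinate is again a covector of $\mathsf{M}$; the covector axioms show this happens exactly when $\max\partial_k t$ vanishes in coordinate $k$ only, and that $t$ and $\sigma_k t$ are then the sole topes above this codimension-one covector. For each $k$-simple pair with $t_k=+$, replacing the column of $\sigma_k t$ by $\mathrm{col}(\sigma_k t)-a_k^{+}\mathrm{col}(t)$ and the row of $\sigma_k t$ by $\mathrm{row}(\sigma_k t)-a_k^{-}\mathrm{row}(t)$ — using $\mathrm{S}(v,\sigma_k t)=\mathrm{S}(v,t)\mathbin{\triangle}\{k\}$ — kills every entry joining $\sigma_k t$ to the side $\{t_k=+\}$ and rescales the surviving $\sigma_k t$-entries by $1-a_k^{+}a_k^{-}=1-\mathrm{b}_{\max\partial_k t}$, accounting for the codimension-one walls. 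The topes $t$ with $\max\partial_k t$ having several zero coordinates, and the topes with no face on the $k$-th pseudohyperplane inside the fiber, are then handled by iterating this device within the topal subfibers $\mathsf{M}_{Z(w),w}$ attached to the higher-codimension walls $w=\max\partial_k t$, $Z(w)$ the zero set of $w$; the net effect contributes $(1-\mathrm{b}_w)^{\beta_w}$ for $\beta_w=\tfrac12\#\{t\in T_{\mathsf{M}}:\max\partial_k t=w\}$. An alternative bookkeeping that may be cleaner: over the fraction field $\mathbb{Q}(a_i^{\pm})$ the matrix $\big(\mathrm{v}(v,w)\big)_{v,w\in T_{\mathsf{M}}\cap\mathsf{M}_{I',u^{+}}}$ is invertible, since its determinant has constant term $1$, so one may form the Schur complement of that block and must then prove its determinant equals $V_{\mathsf{M}_{I',u^{-}}}\cdot\prod_{w_k=0}(1-\mathrm{b}_w)^{\beta_w}$, equivalently that the associated transfer matrix has the weights $\mathrm{b}_w$ as its nonzero eigenvalues with multiplicities $\beta_w$.

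The hard part will be exactly this: understanding how the entries linking the two sides of the $k$-th pseudohyperplane assemble into the residual block, and showing that the exponents that emerge are the $\beta_w$. This is a combinatorial question about which topes lie above a given covector with $w_k=0$ and how the distance $\mathrm{v}$ transforms under the reflections $\sigma_k$, and it is where covector elimination and the composition $\circ$ carry the weight; it is also where one must reconcile the $\beta_w$ produced by the recursion — defined via the coordinate $k$ in play — with the multiplicities in the statement, whose independence of the chosen coordinate is the content of Theorem~\ref{V}.
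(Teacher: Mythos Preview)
Your route is genuinely different from the paper's. The paper does not induct on $|I|$ or perform row/column reductions at all. Instead it (i) invokes the topological representation theorem to realise $\mathsf{M}$ by a pseudohyperplane arrangement $\mathcal{H}$; (ii) proves a Witt-type identity (Proposition~\ref{Witt}) via Euler-characteristic bookkeeping, uses it to show that every $D^*$ lies in the $B_{\mathcal{H}}$-span of the $\gamma_{\mathcal{H}}(C)$ (Proposition~\ref{D*}), and deduces that $\det\mathbf{M}_{\mathcal{H}}$ must be a product of factors $(1-b_F)$ (Proposition~\ref{cen}); (iii) pins down the exponents by comparing the leading monomial in an apartment (Theorem~\ref{V}); and (iv) transfers back to the fiber $\mathsf{M}_{I,u}$ by specialising $a_i^{\pm}=0$ for $i\notin I$, which block-diagonalises $\mathbf{M}_{\mathcal{H}}$ as $\mathbf{M}_{\mathcal{H}}^K\oplus\mathbf{M}_{\mathcal{H}}^{\tilde K}\oplus\mathbf{I}$ and yields the formula by a squaring argument. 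What the paper's approach buys is that the hard combinatorics is replaced by one Euler-characteristic computation and one leading-term comparison; what your approach would buy, if completed, is a proof that stays entirely inside the covector axioms and avoids both the topological representation theorem and the module-theoretic detour.

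That said, your proposal has a real gap at exactly the point you flag. The elimination you describe is correct for $k$-simple pairs: the column and row operations on $\sigma_k t$ do kill the cross-block entries and produce the scalar $1-a_k^+a_k^-=1-\mathrm{b}_{\max\partial_k t}$. But for a tope $t$ whose wall $w=\max\partial_k t$ has several zero coordinates, the map $\sigma_k$ does not land in $\mathsf{M}$, and ``iterating this device within $\mathsf{M}_{Z(w),w}$'' hides the entire difficulty. Inside that subfiber the relevant opposite of $t$ is $w\circ(-t)$, and the distance from $t$ to $w\circ(-t)$ involves \emph{all} the variables $a_i^{\pm}$ with $w_i=0$, not just $a_k^{\pm}$; moreover the operations attached to different walls $w$ interfere, since a single tope on the $+$ side is used as pivot for many $-$-side topes simultaneously. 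You have not explained why the net effect on the off-diagonal block is precisely $\prod_{w_k=0}(1-\mathrm{b}_w)^{\beta_w}$ rather than some messier polynomial in the $\mathrm{b}_w$, nor why the diagonal blocks survive as the untouched matrices of $\mathsf{M}_{I',u^{\pm}}$ after all these operations. The Schur-complement reformulation is cleaner to state, but the claim that the transfer matrix has exactly the $\mathrm{b}_w$ as eigenvalues with multiplicities $\beta_w$ is equivalent in strength to the factorisation itself and is likewise unproved. Finally, you invoke Theorem~\ref{V} for the independence of $\beta_w$ from the chosen coordinate, but in the paper that independence is a \emph{consequence} of the determinant computation, not an input; if you want your argument to be self-contained you must either prove it directly from the covector axioms or extract it a posteriori from your recursion (running the induction with two different choices of $k$ and comparing).
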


\noindent For an oriented matroid and under the condition $a_i^+ = a_i^-$, we recover the determinant computed by Hochstättler and Welker with tools from oriented matroid topology \cite[Th.~1]{HoWe}. Besides, Olzhabayev and Zhang obtained $V_{\mathsf{M}_{I,u}}$, for $(\mathsf{M}_{I,u}, \leq)$ being poset isomorphic to a configuration of pseudolines, from the diagonal form of the associated matrix \cite[Th.~1.2]{OlZh}.

\smallskip

\noindent This article is organized as follows: We begin by recalling the topological representation theorem in Section~\ref{SeTop}. That theorem allows to obtain the Varchenko determinant of an oriented matroid from that of a pseudohyperplane arrangement. To compute that latter, we are inspired by the proof strategy of Aguiar and Mahajan for central hyperplane arrangements \cite[Th.~8.11]{AgMa} by extending a Witt identity to pseudohyperplane arrangements in Section~\ref{SeWi}, and by using that extension to determine the Varchenko determinant of a pseudohyperplane arrangement in Section~\ref{SeVa}. We finally prove Theorem~\ref{ThMain} in Section~\ref{SeMain}.

\section{Topological Representation Theorem} \label{SeTop}

\noindent We describe the topological representation theorem, and the Varchenko determinant of a pseudohyperplane arrangement. For $d \in \mathbb{N}^*$, a \textbf{subsphere} of a sphere $\mathbb{S}^d$ is a subset of $\mathbb{S}^d$ which is homeomorphic to $\mathbb{S}^{d-1}$.

\begin{lemma}\cite[Lem.~5.1.1]{BjLaStWhZi} \label{LeTa}
Letting $d \in \mathbb{N}^*$, for a subsphere $S$ of $\mathbb{S}^d$ the following conditions are equivalent:
\begin{enumerate}
\item $S$ is homeomorphic to the equator $\big\{(x_1, \dots, x_{d+1}) \in \mathbb{S}^d\ |\ x_{d+1}=0\big\}$,
\item $S$ is homeomorphic to some piecewise-linearly embedded subsphere,
\item the closure of each connected component of $\mathbb{S}^d \setminus S$ is homeomorphic to the $d$-ball.
\end{enumerate}
\end{lemma}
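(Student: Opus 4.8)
The plan is to read the three conditions as statements about the pair $(\mathbb{S}^d, S)$ up to ambient homeomorphism of $\mathbb{S}^d$ (otherwise condition (1) would be vacuous, every subsphere being abstractly $\cong \mathbb{S}^{d-1}$), and then to prove the cycle $(1)\Rightarrow(2)\Rightarrow(3)\Rightarrow(1)$. The implication $(1)\Rightarrow(2)$ is immediate: equip $\mathbb{S}^d$ with its standard PL structure (say as $\partial\Delta^{d+1}$), under which the equator $\{x_{d+1}=0\}$ is a PL subsphere; hence any $S$ ambient-homeomorphic to the equator is ambient-homeomorphic to a PL subsphere.

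For $(2)\Rightarrow(3)$ one first reduces, via the ambient homeomorphism furnished by (2), to the case that $S$ is itself a codimension-one PL subsphere $\Sigma$ of $\mathbb{S}^d$. Such a $\Sigma$ is bicollared: regular neighbourhood theory places a copy of $\Sigma\times(-1,1)$ around it, so $\Sigma$ is in particular locally flat. By the Jordan--Brouwer separation theorem (Alexander duality gives $\widetilde H_0(\mathbb{S}^d\setminus\Sigma)\cong\widetilde H^{d-1}(\Sigma)\cong\mathbb{Z}$), the set $\mathbb{S}^d\setminus\Sigma$ has exactly two components, each with frontier $\Sigma$. The generalized Schoenflies theorem of Brown and Mazur then shows that the closure of each of these two components is a topological $d$-ball, and transporting this back along the ambient homeomorphism $\Sigma\mapsto S$ yields (3).

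For $(3)\Rightarrow(1)$, write $B_+,B_-$ for the closures of the two components of $\mathbb{S}^d\setminus S$, so that $\mathbb{S}^d=B_+\cup B_-$ and $B_+\cap B_-=S$. By hypothesis there are homeomorphisms $h_\pm\colon B_\pm\to D^d$, and by invariance of domain and of the manifold boundary each $h_\pm$ restricts to a homeomorphism $S\to\partial D^d=\mathbb{S}^{d-1}$. Presenting the standard sphere as $D^d_+\cup_{\mathbb{S}^{d-1}}D^d_-$, the two boundary homeomorphisms differ by $g:=(h_+|_S)\circ(h_-|_S)^{-1}\in\mathrm{Homeo}(\mathbb{S}^{d-1})$; extend $g$ to $\widehat g\in\mathrm{Homeo}(D^d_-)$ by coning, $\widehat g(tx):=t\,g(x)$ for $x\in\mathbb{S}^{d-1}$, $t\in[0,1]$. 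Then $h_+$ on $B_+$ and $\widehat g\circ h_-$ on $B_-$ agree on $S$, so by the pasting lemma they assemble into a self-homeomorphism of $\mathbb{S}^d$ carrying $S$ onto the equator, which is exactly (1). This closes the cycle.

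The main obstacle is the step $(2)\Rightarrow(3)$. It is tempting to argue directly that a PL $(d-1)$-sphere bounds a PL $d$-ball, but the PL Schoenflies conjecture is still open for $d=4$, so one cannot proceed that way; instead the argument must detour through the topological category, invoking that codimension-one PL submanifolds are locally flat and the Brown--Mazur generalized Schoenflies theorem, both of which hold in every dimension. The remaining steps are formal once these two facts (and Jordan--Brouwer separation) are granted.
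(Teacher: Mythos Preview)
The paper does not supply its own proof of this lemma; it is simply quoted from Bj\"orner, Las~Vergnas, Sturmfels, White, and Ziegler, so there is nothing in the paper to compare your argument against. Your cycle $(1)\Rightarrow(2)\Rightarrow(3)\Rightarrow(1)$ is correct and is the standard route: the first implication is trivial, the second is the generalized Schoenflies theorem of Brown and Mazur once one knows that a codimension-one PL subsphere is bicollared, and the third is the Alexander trick (coning the boundary discrepancy to extend it over the lower ball). Your observation that one must pass through the topological category in $(2)\Rightarrow(3)$, because the PL Schoenflies problem is open for $d=4$, is exactly right.
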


\noindent A subsphere of $\mathbb{S}^d$ satisfying the conditions in Lemma~\ref{LeTa} is called a \textbf{pseudosphere} of $\mathbb{S}^d$. For a pseudosphere $S$ of $\mathbb{S}^d$, denote by $S^+$ and $S^-$ both connected components of $\mathbb{S}^d \setminus S$ called \textbf{sides} of $S$. A finite set $\mathcal{S} = \{S_i\}_{i \in [n]}$ of pseudospheres in $\mathbb{S}^d$ is called a \textbf{pseudosphere arrangement} if the following conditions hold:
\begin{enumerate}
\item $\displaystyle S_I := \bigcap_{i \in I}S_i$ is a sphere for all $I \subseteq [n]$.
\item For $I \subseteq [n]$ and $i \in [n]$, if $S_I \nsubseteq S_i$, then $S_I \cap S_i$ is a pseudosphere in $S_I$ with sides $S_I \cap S_i^+$ and $S_I \cap S_i^-$.
\item The intersection of an arbitrary set of closed sides in $\mathbb{S}^d$ is either a sphere or a ball.
\end{enumerate}

\noindent For a pseudosphere arrangement $\mathcal{S} = \{S_i\}_{i \in [n]}$, define the sign functions $\sigma_i: \mathbb{S}^d \rightarrow \mathbb{E}$ and $\sigma_{\mathcal{S}}: \mathbb{S}^d \rightarrow \mathbb{E}^n$ respectively, for $x \in \mathbb{S}^d$, by $$\sigma_i(x) := \begin{cases}
+ & \text{if}\ x \in S_i^+ \\
- & \text{if}\ x \in S_i^- \\
0 & \text{if}\ x \in S_i
\end{cases} \quad \text{and} \quad \sigma_{\mathcal{S}}(x) := \big(\sigma_1(x), \dots, \sigma_n(x)\big).$$

\noindent Consider a pseudosphere arrangement $\mathcal{S} = \{S_i\}_{i \in [n]}$ in $\mathbb{S}^d$. Its \textbf{rank} is $\displaystyle d - \dim \bigcap_{i \in [n]}S_i$ assuming that $\dim \emptyset = -1$. It is \textbf{essential} if $\displaystyle \bigcap_{i \in [n]}S_i = \emptyset$, and \textbf{centrally symmetric} if $-S_i = S_i$ for every $i \in [n]$. Besides, denote by $\mathsf{M}_{\mathcal{S}}$ the subset $\big\{\sigma_{\mathcal{S}}(x)\ |\ x \in \mathbb{S}^d\big\} \cup \big\{(0, \dots, 0)\big\}$ of $\mathbb{E}^n$.

\begin{theorem}\cite[Topological Representation Theorem~5.2.1]{BjLaStWhZi}
Let $d$ be a nonnegative integer and $\mathsf{M}$ a subset of $\mathbb{E}^n$. Then the following conditions are equivalent:
\begin{enumerate}
\item $\mathsf{M}$ is an oriented matroid of rank $d+1$.
\item For some pseudosphere arrangement $\mathcal{S} = \{S_i\}_{i \in [n]}$ in $\mathbb{S}^{d+1+k}$ such that $\displaystyle \dim \bigcap_{i \in [n]}S_i = k$, we have $\mathsf{M}_{\mathcal{S}} = \mathsf{M}$.
\item There exists an essential and centrally symmetric pseudosphere arrangement $\mathcal{S}$ in $\mathbb{S}^d$ such that its induced cell complex is shellable and $\mathsf{M} = \mathsf{M}_{\mathcal{S}}$.
\end{enumerate}
\end{theorem}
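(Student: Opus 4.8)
The plan is to prove the cycle of implications $(3) \Rightarrow (2) \Rightarrow (1) \Rightarrow (3)$, the last one being the Folkman--Lawrence construction and the only genuinely hard step. The implication $(3) \Rightarrow (2)$ is immediate: an essential arrangement in $\mathbb{S}^d$ satisfies $\bigcap_{i \in [n]} S_i = \emptyset$, which has dimension $k = -1$, and $d + 1 + k = d$, so it is already a pseudosphere arrangement of the kind demanded in $(2)$.

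For $(2) \Rightarrow (1)$, I would verify the four covector axioms for $\mathsf{M}_{\mathcal{S}}$ one at a time. The zero vector lies in $\mathsf{M}_{\mathcal{S}}$ because it is adjoined by definition (and, when $k \geq 0$, because every point of the sphere $\bigcap_{i \in [n]} S_i$ has sign vector $(0,\dots,0)$). For the composition axiom, given points $x, y$ with $\sigma_{\mathcal{S}}(x) = u$ and $\sigma_{\mathcal{S}}(y) = v$, a sufficiently short path leaving $x$ in the direction of $y$ lands in a face with sign vector $u \circ v$; that such a path and face exist uses the pseudosphere-arrangement axioms, in particular that intersections of closed sides are balls (compare Lemma~\ref{LeTa}). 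The strong elimination axiom becomes a connectivity statement inside the sphere $S_J$ with $J = \mathrm{S}(u,v)$: moving within $S_J$ away from the face of $u \circ v$ until one first meets $S_j$ yields the required $w$. Closure under negation is the one axiom needing honest topology rather than a one-line path argument; it follows by induction on the rank from the fact that each $S_I \cap S_i$ is again a two-sided pseudosphere in $S_I$, so the face poset of $\mathcal{S}$ necessarily contains, along with every cell, its sign-reversed cell. Finally the rank is read off from a maximal flag of faces, of length $(d+1+k) - k = d+1$.

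The heart of the matter is $(1) \Rightarrow (3)$, proved by induction on the rank $r = d+1$. In the base case $r = 1$ (so $d = 0$) the oriented matroid is determined by the parallel and antiparallel classes of its elements and is realized by a family of two-point arrangements on $\mathbb{S}^0$. For the inductive step, form the \emph{big face lattice} $\widehat{\mathsf{M}} := \mathsf{M} \cup \{\hat 1\}$, ordered by $\leq$ with bottom element $(0,\dots,0)$. I would first show that every interval of $\widehat{\mathsf{M}}$ has order complex homeomorphic to a sphere of the expected dimension, which by Bj\"orner's criterion identifies $\mathsf{M} \setminus \{(0,\dots,0)\}$ with the face poset of a regular CW complex $\Delta$ on the underlying space $\mathbb{S}^d$, and then show that $\Delta$ is in fact a piecewise-linear $d$-sphere. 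Both statements are driven by a \emph{lexicographic shelling} of $\widehat{\mathsf{M}}$: one orders the topes, which are the coatoms, by the sweep induced by a generic single-element extension of $\mathsf{M}$, and uses the structure of the cocircuits (the minimal nonzero covectors) to see that each tope is attached along a shellable ball; one then invokes the Danaraj--Klee and Bj\"orner recognition principle that a shellable regular CW complex which is \emph{thin} --- every codimension-one cell lying in exactly two maximal cells --- is a piecewise-linear sphere, thinness being precisely the fact, provable from the covector axioms by composing with topes, that a covector with a single zero coordinate is covered by exactly two topes. Having realized $\mathsf{M}$ as the face poset of a cell decomposition of $\mathbb{S}^d$, I would then recover the individual pseudospheres: for each $e \in [n]$, the covectors of $\mathsf{M}$ with $e$-th coordinate $0$ form, after deletion of that coordinate, an oriented matroid of rank $r - 1$, realized by the inductive hypothesis as a pseudosphere arrangement which one identifies with a pseudosphere $S_e \subseteq \mathbb{S}^d$ so that $S_e^{+}$ and $S_e^{-}$ match the sign of coordinate $e$. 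Since $\mathsf{M} = -\mathsf{M}$, the whole construction can be carried out antipodally symmetrically, giving central symmetry, and the shelling exhibited along the way witnesses shellability of the induced cell complex.

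The main obstacle is not recognizing that $\widehat{\mathsf{M}}$ ``is spherical'' --- shellability together with thinness delivers that comparatively directly --- but showing that the $n$ subspheres $\{S_e\}_{e \in [n]}$ assemble into a bona fide pseudosphere arrangement, that is, that every $S_I = \bigcap_{i \in I} S_i$ is a sphere and every intersection of closed sides is a ball or a sphere. This must be checked essentially cell by cell with piecewise-linear topology --- Newman's theorem, the piecewise-linear Schoenflies theorem, regular neighbourhoods --- while the shelling controls the combinatorics of the gluing, and it is here, in threading deletion and contraction through the topological picture, that the real work of the Topological Representation Theorem lies.
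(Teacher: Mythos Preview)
The paper does not prove this statement at all: it is quoted verbatim as \cite[Topological Representation Theorem~5.2.1]{BjLaStWhZi} and used as a black box, with no argument supplied. There is therefore nothing in the paper to compare your proposal against.

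That said, your outline is a faithful summary of the standard proof in the cited reference of Bj\"orner, Las Vergnas, Sturmfels, White, and Ziegler: the cycle $(3)\Rightarrow(2)\Rightarrow(1)\Rightarrow(3)$, the verification of the covector axioms from the pseudosphere-arrangement axioms for $(2)\Rightarrow(1)$, and for $(1)\Rightarrow(3)$ the recursive-coatom (lexicographic) shelling of the big face lattice, thinness, the Bj\"orner and Danaraj--Klee recognition of PL spheres, and the inductive recovery of the individual pseudospheres via deletion/contraction together with PL topology (Newman, PL Schoenflies). Your identification of the genuinely delicate point---checking that the assembled $\{S_e\}$ actually satisfy the pseudosphere-arrangement axioms---is also where the cited source spends most of its effort. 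So your plan is the right one; it simply reproduces the literature proof that the present paper is content to cite.
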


\noindent Björner et al. defined the pseudohyperplane arrangements in real projective spaces \cite[§~5.2]{BjLaStWhZi}. For us it is more convenient to define them on Euclidean spaces like Deshpande did \cite[Def~3.11]{De}. For $d \in \mathbb{N}^*$, a \textbf{pseudohyperplane} in $\mathbb{R}^{d+1}$ is set $H \subseteq \mathbb{R}^{d+1}$ such that $H$ is homeomorphic to $\mathbb{R}^{d}$, and $S = H \cap \mathbb{S}^d$ is a pseudosphere of a sphere $\mathbb{S}^d \subseteq \mathbb{R}^{d+1}$. Denote by $H^+$ and $H^-$ both connected components of $\mathbb{R}^{d+1} \setminus H$ such that $S^+ \subseteq H^+$. A \textbf{pseudohyperplane arrangement} is a finite set $\mathcal{H}$ of pseudohyperplanes in $\mathbb{R}^{d+1}$ such that $\{H \cap \mathbb{S}^d\ |\ H \in \mathcal{H}\}$ is a centrally symmetric pseudosphere arrangement in $\mathbb{S}^d$. For a pseudohyperplane arrangement $\mathcal{H} = \{H_i\}_{i \in [n]}$, define the sign functions $\epsilon_i: \mathbb{R}^{d+1} \rightarrow \mathbb{E}$ and $\epsilon_{\mathcal{H}}: \mathbb{R}^{d+1} \rightarrow \mathbb{E}^n$ respectively by $$\epsilon_i(x) := \begin{cases}
	+ & \text{if}\ x \in H_i^+ \\
	- & \text{if}\ x \in H_i^- \\
	0 & \text{if}\ x \in H_i
\end{cases} \quad \text{and} \quad \epsilon_{\mathcal{H}}(x) := \big(\epsilon_1(x), \dots, \epsilon_n(x)\big).$$
Let $\mathsf{M}_{\mathcal{H}}$ be the subset $\big\{\epsilon_{\mathcal{H}}(x)\ |\ x \in \mathbb{R}^{d+1}\big\}$ of $\mathbb{E}^n$. \emph{Denoting by $\mathcal{S}$ the pseudosphere arrangement $\{H \cap \mathbb{S}^d\ |\ H \in \mathcal{H}\}$, we then have $\mathsf{M}_{\mathcal{H}} = \mathsf{M}_{\mathcal{S}}$}. Hence, we deduce the following corollary.

\begin{corollary} \label{CoPs}
Let $d$ be a positive integer and $\mathsf{M}$ a subset of $\mathbb{E}^n$. Then the following conditions are equivalent:
\begin{enumerate}
\item $\mathsf{M}$ is an oriented matroid of rank $d$.
\item For some pseudohyperplane arrangement $\mathcal{H} = \{H_i\}_{i \in [n]}$ in $\mathbb{R}^{d+k}$ with $\displaystyle \dim \bigcap_{i \in [n]}H_i = k$, we have $\mathsf{M}_{\mathcal{H}} = \mathsf{M}$.
\item There exists a pseudohyperplane arrangement $\mathcal{H}$ in $\mathbb{R}^d$ such that $\mathsf{M} = \mathsf{M}_{\mathcal{H}}$.
\end{enumerate}
\end{corollary}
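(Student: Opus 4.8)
The plan is to read the three conditions of Corollary~\ref{CoPs} off the three conditions of the Topological Representation Theorem, using the operation $\mathcal{H} \mapsto \mathcal{S}_{\mathcal{H}} := \{H \cap \mathbb{S}^d\ |\ H \in \mathcal{H}\}$ that restricts a pseudohyperplane arrangement $\mathcal{H}$ in $\mathbb{R}^{d+1}$ to the sphere $\mathbb{S}^d \subseteq \mathbb{R}^{d+1}$. By the very definition of a pseudohyperplane arrangement, $\mathcal{S}_{\mathcal{H}}$ is a centrally symmetric pseudosphere arrangement in $\mathbb{S}^d$, and the discussion preceding the corollary already provides the key identity $\mathsf{M}_{\mathcal{H}} = \mathsf{M}_{\mathcal{S}_{\mathcal{H}}}$ of covector sets. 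So it remains to exhibit an inverse to $\mathcal{H} \mapsto \mathcal{S}_{\mathcal{H}}$ and to track how rank, essentiality and shellability behave under the correspondence.

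For the inverse I would use radial coning: for a centrally symmetric pseudosphere $S$ of $\mathbb{S}^d$, put $\widehat{S} := \{tx\ |\ t \in \mathbb{R},\ x \in S\} \subseteq \mathbb{R}^{d+1}$. Since $-S = S$ one has $\widehat{S} = \{tx\ |\ t \geq 0,\ x \in S\}$, so $\widehat{S}$ is the open cone on $S \cong \mathbb{S}^{d-1}$ and hence homeomorphic to $\mathbb{R}^d$; moreover $\widehat{S} \cap \mathbb{S}^d = S$, and the radial cones of $S^+$ and $S^-$ are the two sides of $\widehat{S}$ in $\mathbb{R}^{d+1}$. Coning every member of a centrally symmetric pseudosphere arrangement $\mathcal{S} = \{S_i\}_{i \in [n]}$ produces a family $\widehat{\mathcal{S}} := \{\widehat{S_i}\}_{i \in [n]}$ with $\{\widehat{S_i} \cap \mathbb{S}^d\}_{i \in [n]} = \mathcal{S}$; verifying that $\widehat{\mathcal{S}}$ obeys the axioms of a pseudohyperplane arrangement then makes $\mathcal{S} \mapsto \widehat{\mathcal{S}}$ a two-sided inverse of $\mathcal{H} \mapsto \mathcal{S}_{\mathcal{H}}$, and since the sign functions agree we get $\mathsf{M}_{\widehat{\mathcal{S}}} = \mathsf{M}_{\mathcal{S}}$.

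I would then do the dimension bookkeeping. A pseudohyperplane arrangement $\mathcal{H} = \{H_i\}$ in $\mathbb{R}^{d+k}$ restricts to a pseudosphere arrangement in $\mathbb{S}^{d+k-1}$ whose $\bigcap_{i\in[n]} S_i = \big(\bigcap_{i\in[n]} H_i\big) \cap \mathbb{S}^{d+k-1}$, so the hypothesis $\dim \bigcap_{i\in[n]} H_i = k$ corresponds to $\dim \bigcap_{i\in[n]} S_i = k-1$, and $\bigcap_{i\in[n]} H_i = \{0\}$ corresponds to essentiality $\bigcap_{i\in[n]} S_i = \emptyset$ (conversely, radial coning raises these dimensions by one). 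With $d + k - 1 = (d-1)+1+(k-1)$ and $d \geq 1$, condition~(2) of Corollary~\ref{CoPs} thus becomes condition~(2) of the Topological Representation Theorem with ambient sphere $\mathbb{S}^{(d-1)+1+(k-1)}$ and $\dim \bigcap S_i = k-1$, which asserts that $\mathsf{M}$ is an oriented matroid of rank $(d-1)+1 = d$; likewise condition~(3) of the corollary — read with the natural requirements that $\mathcal{H}$ be essential and induce a shellable cell decomposition of $\mathbb{R}^d$ — becomes condition~(3) of the Topological Representation Theorem in $\mathbb{S}^{d-1}$. Because every arrow in the argument preserves covector sets, conditions~(1), (2), (3) are pairwise equivalent.

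The main obstacle I anticipate is the topological content of the second and third paragraphs: showing that the radial cone of a centrally symmetric pseudosphere is genuinely a pseudohyperplane, that radial coning of a centrally symmetric pseudosphere arrangement yields a pseudohyperplane arrangement satisfying the defining conditions, and that this operation is a two-sided inverse of restriction to the sphere that behaves as claimed on intersections, essentiality and shellability (in particular handling possibly non-central pseudohyperplanes whose sphere restrictions are nonetheless centrally symmetric). Once that correspondence is in place, the proof is a formal application of the Topological Representation Theorem through the identity $\mathsf{M}_{\mathcal{H}} = \mathsf{M}_{\mathcal{S}_{\mathcal{H}}}$.
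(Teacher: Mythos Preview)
Your approach is essentially the same as the paper's: the paper's proof is a three-line sketch that invokes the Topological Representation Theorem, the definition of a pseudohyperplane arrangement, and the unproven ``fact'' that every pseudosphere arrangement in $\mathbb{S}^{d-1}$ arises as $\{H \cap \mathbb{S}^{d-1}\ |\ H \in \mathcal{H}\}$ for some pseudohyperplane arrangement $\mathcal{H}$ in $\mathbb{R}^d$ with $\mathsf{M}_{\mathcal{H}} = \mathsf{M}_{\mathcal{S}}$. Your radial coning construction is exactly the natural proof of that asserted fact, and your dimension bookkeeping is the explicit translation between the parameters in the two statements --- so you are filling in details the paper leaves implicit rather than taking a different route.

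One minor discrepancy worth flagging: you read condition~(3) of the corollary ``with the natural requirements that $\mathcal{H}$ be essential and induce a shellable cell decomposition''. The corollary as stated imposes neither condition, and the paper's proof does not add them either. For the direction $(1)\Rightarrow(3)$ you can indeed land on an essential, shellable arrangement via condition~(3) of the Topological Representation Theorem, but for $(3)\Rightarrow(1)$ you should argue without assuming shellability: any pseudohyperplane arrangement in $\mathbb{R}^d$ restricts to a centrally symmetric pseudosphere arrangement in $\mathbb{S}^{d-1}$, and condition~(2) of the Topological Representation Theorem (which has no shellability hypothesis) already gives that $\mathsf{M}$ is an oriented matroid. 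So drop the shellability clause from your reading of~(3); otherwise your plan matches the paper.
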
 

\begin{proof}
Use the topological representation theorem, the definition of a pseudohyperplane arrangement, and the fact that, for any pseudosphere arrangement $\mathcal{S}$ in $\mathbb{S}^{d-1}$, there exists a pseudohyperplane arrangement $\mathcal{H}$ in $\mathbb{R}^d$ such that $\mathcal{S} = \{H \cap \mathbb{S}^{d-1}\ |\ H \in \mathcal{H}\}$ and $\mathsf{M}_{\mathcal{H}} = \mathsf{M}_{\mathcal{S}}$.
\end{proof}

\noindent A subset $F$ of $\mathbb{R}^d$ is a \textbf{face} of a pseudohyperplane arrangement $\mathcal{H}$ if there exists a covector $u$ in $\mathsf{M}_{\mathcal{H}}$ such that $F := \epsilon_{\mathcal{H}}^{-1}(u)$. Denote the set formed by the faces of $\mathcal{H}$ by $F_{\mathcal{H}}$. It is a poset with partial order $\preceq$ defined by $F \preceq G \, \Longleftrightarrow \, \epsilon_{\mathcal{H}}(F) \leq \epsilon_{\mathcal{H}}(G)$. \emph{We observe that $(F_{\mathcal{H}}, \preceq)$ is poset isomorphic to $(\mathsf{M}_{\mathcal{H}}, \leq)$}. A \textbf{chamber} of $\mathcal{H}$ is a face $C$ such that $\epsilon_{\mathcal{H}}(C)$ contains no $0$. Denote the set formed by the chambers of $\mathcal{H}$ by $C_{\mathcal{H}}$. Consider a pseudohyperplane arrangement $\mathcal{H} = \{H_i\}_{i \in [n]}$ in $\mathbb{R}^d$, and let $R_{\mathcal{H}}$ be the polynomial ring $\mathbb{Z}\big[a_i^+, a_i^-\ \big|\ i \in [n]\big]$ with variables $a_i^+, a_i^-$. Define the Aguiar-Mahajan distance $v: C_{\mathcal{H}} \times C_{\mathcal{H}} \rightarrow R_{\mathcal{H}}$ on chambers by $$v(C,C) = 1 \quad \text{and} \quad v(C,D) = \prod_{i \in \mathrm{S}\big(\epsilon_{\mathcal{H}}(C), \epsilon_{\mathcal{H}}(D)\big)} a_i^{\epsilon_i(C)}\,\ \text{if}\,\ C \neq D.$$
The Varchenko determinant of the pseudohyperplane arrangement $\mathcal{H}$ is $$V_{\mathcal{H}} := \big|v(D,C)\big|_{C,D \in C_{\mathcal{H}}}.$$

\noindent The weight of a face $F \in F_{\mathcal{H}} \setminus C_{\mathcal{H}}$ is the monomial $\displaystyle b_F := \prod_{\substack{i \in [n] \\ \epsilon_i(F) = 0}} a_i^+ a_i^-$, and its multiplicity the integer $\displaystyle \beta_F := \frac{\#\{C \in C_{\mathcal{H}}\ |\ \overline{C} \cap H_i = F\}}{2}$ with $\epsilon_i(F) = 0$. One can easily see that $b_F = \mathrm{b}_{\epsilon_{\mathcal{H}}(F)}$ and $\beta_F = \beta_{\epsilon_{\mathcal{H}}(F)}$ if one considers the oriented matroid $\mathsf{M}_{\mathcal{H}}$. \emph{We can now state the intermediate result which will allow to prove Theorem~\ref{ThMain}}.

\begin{theorem}  \label{main}
Let $\mathcal{H}$ be a pseudohyperplane arrangement in $\mathbb{R}^d$. Then,
$$\displaystyle V_{\mathcal{H}} = \prod_{F \in F_{\mathcal{H}} \setminus C_{\mathcal{H}}} (1 - b_F)^{\beta_F}.$$
\end{theorem}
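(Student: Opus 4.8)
I plan to follow the roadmap announced in the introduction. The first step is to pass, via Corollary~\ref{CoPs} and the topological representation theorem, from the pseudohyperplane arrangement $\mathcal{H}$ to the combinatorics of its oriented matroid $\mathsf{M}_{\mathcal{H}}$: the point is that $\mathcal{H}$ carries exactly the same face, chamber and flat data as a genuine real arrangement --- its induced cell complex is shellable with each closed cell a ball, the deletion $\mathcal{H}\setminus H_i$ and the restriction $\mathcal{H}^{H_i}$ are again pseudohyperplane arrangements, and the projection (Tits) maps on faces are available. This is what makes the auxiliary facts used below legitimate: a chamber has at most one facet on a given pseudohyperplane, those facets biject with the chambers of $\mathcal{H}^{H_i}$, and the multiplicity $\beta_F$ is independent of the coordinate used to define it (Theorem~\ref{V}).

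Following Aguiar--Mahajan's treatment of the central real case \cite[Th.~8.11]{AgMa}, the combinatorial engine is a Witt-type identity; the content of Section~\ref{SeWi} is to set up the corresponding identity for pseudohyperplane arrangements. Since this is a purely combinatorial statement about the face poset --- it records how the local data attached to faces multiply together into a product ranging over the whole arrangement --- its proof transfers essentially verbatim once the face poset has been identified with that of an oriented matroid, shellability being what guarantees that the chamber counts entering the identity behave as in the real case.

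With the Witt identity in hand, the determinant is extracted in Section~\ref{SeVa}. I would single out one pseudohyperplane $H_i$ and use that the chambers of $\mathcal{H}$ having a facet on $H_i$ occur in antipodal pairs $\{C,C^{\flat}\}$ separated by $H_i$ alone; the column operations replacing such a pair by the combinations obtained by subtracting $a_i^{+}$, respectively $a_i^{-}$, times one column from the other extract a factor $1-a_i^{+}a_i^{-}$ from each pair, which accounts exactly for the codimension-one contribution $(1-a_i^{+}a_i^{-})^{|C_{\mathcal{H}^{H_i}}|}$. Iterating over the pseudohyperplanes and tracking what is left produces a product over all non-chamber faces in which the exponent of each factor $1-b_F$ is, \emph{a priori}, only some combinatorially defined chamber count; the Witt identity of Section~\ref{SeWi} is precisely what rewrites that count as $\beta_F$, finishing the proof.

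I expect the real difficulty to be this last matching, together with getting the recursion exactly right: crude deletion--restriction guesses already fail on small examples --- for instance a generic arrangement of three lines, where every vertex has $\beta_F=0$ whereas a naive recursion predicts spurious mixed factors --- so one must follow carefully how the faces of $\mathcal{H}$ relate to those of $\mathcal{H}\setminus H_i$ and of $\mathcal{H}^{H_i}$ (a face of the deletion lifting to one or three faces of $\mathcal{H}$, a face lying on $H_i$ being a face of the restriction), and how $b_F$ and $\beta_F$ transform, including the reweighting of restriction faces by the ambient factor $a_i^{+}a_i^{-}$, so that no spurious factor survives and every exponent collapses to $\beta_F$. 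Shellability and the coordinate-independence of $\beta_F$ are what keep these counts under control, while the Witt identity is the bookkeeping device that makes the final global product come out right.
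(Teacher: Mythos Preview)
Your outline diverges from the paper at the very point you flag as the ``real difficulty''. In the paper, the Witt identity of Section~\ref{SeWi} is \emph{not} used to rewrite chamber counts coming from a column-operation or deletion--restriction recursion. Instead, Proposition~\ref{Witt} feeds into Proposition~\ref{D*}: by backward induction on the rank of $A$ in a nested pair $(A,D)$, one shows that every $D^*$ lies in the $B_{\mathcal{H}}$-span of the $\gamma_{\mathcal{H}}(C)$, where $B_{\mathcal{H}}$ is the localisation of $R_{\mathcal{H}}$ at the multiplicative set generated by the $(1-b_F)$. This says the entries of $\mathbf{M}_{\mathcal{H}}^{-1}$ lie in $B_{\mathcal{H}}$; since the adjugate has entries in $R_{\mathcal{H}}$ and the constant term of $\det\mathbf{M}_{\mathcal{H}}$ is $1$, the determinant is forced to be a product $\prod_F(1-b_F)^{l_F}$ (Proposition~\ref{cen}). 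No column operations, no deletion--restriction.

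The exponents $l_F$ are then pinned down in Theorem~\ref{V} by a separate argument: for each non-chamber face $E$ one passes to an apartment $K$ with $E=\bigcap_{H_i\in\mathcal{H}^K}H_i\cap K$, and compares the exponent of $a_i^+a_i^-$ in the leading monomial $(-\prod_{H_i\in\mathcal{H}_E}a_i^+a_i^-)^{\#C_{\mathcal{H}}^K/2}$ of $\det\mathbf{M}_{\mathcal{H}}^K$ against the already-known exponents $\beta_F$ for $F\supsetneq E$ (backward induction on $\dim E$). This simultaneously proves $l_E=\beta_E^{H_i}$ for every $H_i\supseteq E$, hence the coordinate-independence of $\beta_E$ you cite from Theorem~\ref{V}---so that independence is an \emph{output} of this argument, not an input.

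Your column-operation scheme is closer in spirit to Varchenko's original proof or to the diagonal-form arguments of Gao--Zhang, and it can be made to work, but the bookkeeping you defer to ``the Witt identity'' is not what Proposition~\ref{Witt} provides: that proposition is an alternating-sum identity $\sum_{F}(-1)^{\mathrm{rk}\,F}\sum_{FC=D}x_C=(-1)^{\mathrm{rk}\,D}\sum_{AC=A\tilde D}x_C$, tailored to the inverse-matrix induction above, and it does not directly identify the residual exponents in a deletion--restriction recursion with $\beta_F$. If you want to carry out your route, you would need a different combinatorial lemma for that matching; as written, the step ``the Witt identity rewrites that count as $\beta_F$'' is the gap.
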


\noindent \emph{From the isomorphism between $(F_{\mathcal{H}}, \preceq)$ and $(\mathsf{M}_{\mathcal{H}}, \leq)$, we naturally have $V_{\mathcal{H}} = V_{\mathsf{M}_{\mathcal{H}}}$}.

\begin{example}
Consider the oriented matroid $\mathsf{M} = \mathsf{M}_{\mathcal{S}}$ where $\mathcal{S}$ is the non-Pappus arrangement of $10$ pseudocircles \cite[Ex.~3.12]{De}. There exist $i \in [10]$, that we can assume to be $10$, and a covector $u \in \mathsf{M}$ such that the fiber $\mathsf{M}_{[9],u}$ is poset isomorphic to the face poset of the non-Pappus configuration having $9$ pseudolines in Figure~\ref{ExNo}. It is known that there exists no hyperplane arrangement whose face poset is isomorphic to that of the non-Pappus configuration \cite[Ex.~7.28]{Zi}. We use that latter to computer the Varchenko determinant of $\mathsf{M}_{[9],u}$. For simplicity, we assume that the variables associated to that the pseudolines are all equal to $a$. Regarding its faces, the non-Pappus configuration has $33$ chambers, then $\mathsf{M}_{[9],u}$ contains $33$ topes. Moreover, it has $43$ lines with weight $a^2$ and multiplicity $1$, $8$ points with weight $a^6$ and multiplicity $1$, and $7$ points with weight $a^4$ and multiplicity $0$. Hence we obtain $$V_{\mathsf{M}_{[9],u}} = (1-a^2)^{43} (1-a^6)^8.$$
	
\begin{figure}[h]
	\centering
	\includegraphics[scale=0.75]{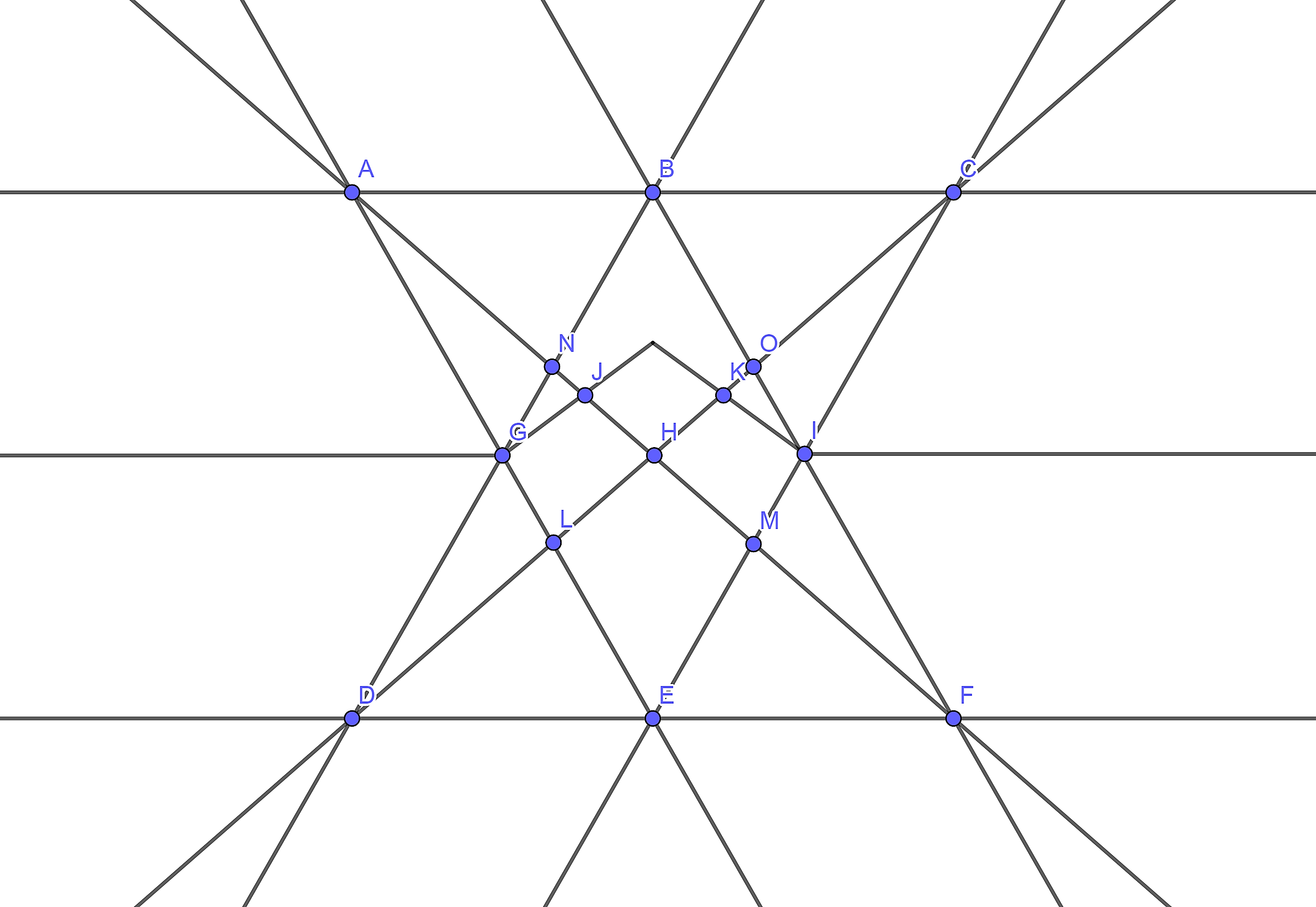}
	\caption{Non-Pappus Configuration}
	\label{ExNo}
\end{figure}	
\end{example}

\section{Generalized Witt Identity}  \label{SeWi}

\noindent We extend the Witt identity \cite[Proposition~7.30]{AgMa} to pseudohyperplane arrangements, and use that extension to prove a key equality on the chambers. The faces of a pseudohyperplane arrangement $\mathcal{H}$ form a monoid with the multiplication defined, for $F,G \in F_{\mathcal{H}}$, by
$$FG := \epsilon_{\mathcal{H}}^{-1}\big(\epsilon_{\mathcal{H}}(F) \circ \epsilon_{\mathcal{H}}(G)\big).$$ 

\noindent A \textbf{nested face} of $\mathcal{H}$ is a pair $(F,G)$ of faces in $F_{\mathcal{H}}$ such that $F \prec G$. Denote by $F_{\mathcal{H}}^{(F,G)}$ the set of faces $\{E \in F_{\mathcal{H}}\ |\ F \preceq E \preceq G\}$. Moreover, letting $\displaystyle c_{\mathcal{H}} := \dim \bigcap_{H \in \mathcal{H}}H$, the \textbf{rank} of a face $F \in F_{\mathcal{H}}$ is $\mathrm{rk}\,F := \dim F - c_{\mathcal{H}}$. The \textbf{opposite} of a face $F \in F_{\mathcal{H}}$ is the face $\tilde{F}$ of $\mathcal{H}$ such that $\epsilon_{\mathcal{H}}(\tilde{F}) = -\epsilon_{\mathcal{H}}(F)$. For a face $F$ of $\mathcal{H} = \{H_i\}_{i \in [n]}$, let $\mathcal{H}_F$ be the pseudohyperplane arrangement $\big\{H_i \in \mathcal{H}\ \big|\ \epsilon_i(F) = 0\big\}$. Let $\chi$ be the function Euler characteristic of the shape of a topological space. Furthermore, assign a variable $x_C$ to each chamber $C \in C_{\mathcal{H}}$.

\begin{proposition}  \label{Witt}
Let $\mathcal{H}$ be a pseudohyperplane arrangement in $\mathbb{R}^d$, $D \in C_{\mathcal{H}}$, and $(A,D)$ a nested face of $\mathcal{H}$. Then, $$\sum_{F \in F_{\mathcal{H}}^{(A,D)}} (-1)^{\mathrm{rk}\,F} \sum_{\substack{C \in C_{\mathcal{H}} \\ FC = D}} x_C \, = \, (-1)^{\mathrm{rk}\,D} \sum_{\substack{C \in C_{\mathcal{H}} \\ AC = A\tilde{D}}} x_C.$$
\end{proposition}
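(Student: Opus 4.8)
The plan is to reduce the identity to a statement about Euler characteristics of certain subcomplexes of the sphere $\mathbb{S}^d$, following the strategy of Aguiar--Mahajan but working topologically since we lack linear structure. First I would rewrite the left-hand side by interchanging the order of summation: group the terms according to the chamber $C$, so the coefficient of $x_C$ becomes $\sum_{F} (-1)^{\mathrm{rk}\,F}$, where $F$ ranges over faces in $F_{\mathcal{H}}^{(A,D)}$ with $FC = D$. The condition $FC = D$ together with $F \in F_{\mathcal{H}}^{(A,D)}$ pins down exactly which faces contribute: in sign-vector terms, $\epsilon_{\mathcal{H}}(F) \circ \epsilon_{\mathcal{H}}(C) = \epsilon_{\mathcal{H}}(D)$ forces $\epsilon_i(F) = \epsilon_i(D)$ wherever $\epsilon_i(F) \neq 0$, and on the zero set of $F$ one needs $\epsilon_i(C) = \epsilon_i(D)$. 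So the relevant faces $F$ are those lying in the interval $[A,D]$ whose ``zero coordinates'' index pseudohyperplanes $H_i$ containing $C$ on the same side as $D$ — equivalently, the set $\{i : \epsilon_i(F) = 0\}$ must avoid $\mathrm{S}(\epsilon_{\mathcal{H}}(C), \epsilon_{\mathcal{H}}(D))$. I expect the coefficient of $x_C$ on the left to be, up to sign, an Euler characteristic of the order complex of a subinterval of $F_{\mathcal{H}}^{(A,D)}$ determined by $C$, via the standard fact that $\sum_{F \in P}(-1)^{\mathrm{rk}\,F}$ over a face poset equals a reduced Euler characteristic of the associated cell complex (here using that $\mathrm{rk}$ is the topological dimension of the face).

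Next I would identify the geometry: the faces $E$ with $A \preceq E \preceq D$ form a cell decomposition of the closure $\overline{D} \cap \bigcap_{i : \epsilon_i(A) = 0} H_i$, essentially a ball (by the pseudosphere arrangement axiom that intersections of closed sides are balls or spheres). Restricting further to those $F$ compatible with a fixed chamber $C$ cuts out a subcomplex, and the alternating sum over it computes $\chi$ of that subcomplex. The claim then amounts to: this subcomplex is contractible (Euler characteristic giving the term $(-1)^{\mathrm{rk}\,D}$ when $C$ makes the whole interval admissible, matching $AC = A\tilde D$) or has vanishing reduced Euler characteristic otherwise. Concretely, $AC = A\tilde{D}$ says that on the support of $A$ the chamber $C$ agrees with $A$, and off it $C$ agrees with $\tilde D$ (the opposite of $D$); these are exactly the chambers $C$ for which the only $F \in [A,D]$ with $FC = D$ is $F = D$ itself — so the left side collapses to $(-1)^{\mathrm{rk}\,D} x_C$, matching the right side. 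For all other chambers I must show the signed count of contributing $F$'s is zero.

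The main obstacle, and the heart of the argument, is this last vanishing. I would handle it by showing that for a chamber $C$ not of the special form, the poset of contributing faces $\{F \in [A,D] : FC = D\}$ — or rather the cell complex it indexes inside $\overline{D} \cap \bigcap_{i:\epsilon_i(A)=0}H_i$ — is the closure of a nonempty proper subset of cells that is actually a closed ball of positive dimension, or more precisely has the homotopy type of a point, so its reduced Euler characteristic vanishes and the ordinary one is $1$... — here one must be careful with whether the relevant complex is a full closed cell (contractible, reduced $\chi = 0$) versus the boundary sphere. The clean way is to phrase it via the deletion/restriction arrangement $\mathcal{H}_F$ and an inclusion-exclusion over the set $J = \{i : H_i \supseteq \text{something}\}$, and invoke that the order complex of an interval in the face lattice of a pseudohyperplane arrangement is either a sphere or a ball (shellability from the Topological Representation Theorem), whose Euler characteristics are $0$ and $1$ respectively. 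I would then match: ball case (reduced $\chi=0$) cancels, sphere-of-dimension-$(\mathrm{rk}\,D - 1)$ case contributes $(-1)^{\mathrm{rk}\,D}$, and check the sphere case occurs precisely when $AC = A\tilde D$. Assembling the coefficient-by-coefficient comparison then yields the proposition; the bookkeeping of signs (ensuring $\mathrm{rk}$ in the poset matches the topological dimension shift $\dim F - c_{\mathcal{H}}$) is routine but must be done with care.
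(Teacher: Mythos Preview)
Your plan is essentially the paper's: interchange the two sums so that the coefficient of $x_C$ becomes $\sum (-1)^{\mathrm{rk}\,F}$ over $\{F\in[A,D]:FC=D\}$, observe that this equals $(-1)^{\mathrm{rk}\,D}$ when $AC=A\tilde D$ (only $F=D$ contributes), and show it vanishes otherwise by an Euler-characteristic argument.

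The only soft spot is your sphere/ball framing of the vanishing step. The paper does not work with order complexes; instead it passes to the localized arrangement $\mathcal{H}_A$ via the bijection $f\colon F_{\mathcal{H}}^{(A,\cdot)}\to F_{\mathcal{H}_A}$ and distinguishes two further subcases for the remaining chambers. If $\epsilon_{\mathcal{H}_A}(C)=\epsilon_{\mathcal{H}_A}(D)$ then \emph{every} $F\in[A,D]$ satisfies $FC=D$, and the alternating sum is $(-1)^{c_{\mathcal{H}}}\sum_{G\preceq f(D)}(-1)^{\dim G}$ over all faces of the closed cone $\overline{f(D)}$, which is $0$. Otherwise the paper introduces $\mathcal{H}_A(C)=\{H_i\in\mathcal{H}_A:\epsilon_i(C)\ne\epsilon_i(D)\}$, identifies the contributing faces with those of $\overline{f(D)}$ lying strictly on the $D$-side of every $H_i\in\mathcal{H}_A(C)$, and computes the alternating sum as $\chi\bigl(\overline{f(D)}\bigr)-\chi\bigl(\bigcup_G\overline{G}\bigr)$ over the relevant codimension-one faces $G$, again $0$. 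Your assertion that ``the sphere case occurs precisely when $AC=A\tilde D$'' is not right as stated: in that case the contributing set is just $\{D\}$ and the coefficient $(-1)^{\mathrm{rk}\,D}$ comes from a single term, not from a sphere. The filter $\{F\in[A,D]:FC=D\}$ is in general neither the whole interval nor its boundary, so the dichotomy you want does not match the three cases that actually arise; recasting your argument through $\mathcal{H}_A$ as above fixes this.
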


\begin{proof}
We have $\displaystyle \sum_{F \in F_{\mathcal{H}}^{(A,D)}} (-1)^{\mathrm{rk}\,F} \sum_{\substack{C \in C_{\mathcal{H}} \\ FC = D}} x_C = \sum_{C \in C_{\mathcal{H}}} \Big( \sum_{\substack{F \in F_{\mathcal{H}}^{(A,D)} \\ FC = D}} (-1)^{\mathrm{rk}\,F} \Big) x_C$. 
\begin{itemize}
\item If $\epsilon_{\mathcal{H}_A}(C) = \epsilon_{\mathcal{H}_A}(A\tilde{D})$, then $\displaystyle \sum_{\substack{F \in F_{\mathcal{H}}^{(A,D)} \\ FC = D}} (-1)^{\mathrm{rk}\,F} = (-1)^{\mathrm{rk}\,D}$.
\end{itemize}

\noindent Denote by $F_{\mathcal{H}}^{(A,\centerdot)}$ the set of faces $\{F \in F_{\mathcal{H}}\ |\ A \preceq F\} = \big\{F \in F_{\mathcal{H}}\ \big|\ \epsilon_{\mathcal{H} \setminus \mathcal{H}_A}(F) = \epsilon_{\mathcal{H} \setminus \mathcal{H}_A}(A)\big\}$. Let $f: F_{\mathcal{H}}^{(A,\centerdot)} \rightarrow F_{\mathcal{H}_A}$ be the bijection such that, if $F \in F_{\mathcal{H}}^{(A,\centerdot)}$, then $f(F)$ is the face of $\mathcal{H}_A$ such that $\epsilon_{\mathcal{H}_A}\big(f(F)\big) = \epsilon_{\mathcal{H}_A}(F)$.

\begin{itemize}
\item If $\epsilon_{\mathcal{H}_A}(C) = \epsilon_{\mathcal{H}_A}(D)$, then
\begin{align*}
\sum_{\substack{F \in F_{\mathcal{H}}^{(A,D)} \\ FC = D}} (-1)^{\mathrm{rk}\,F} & = (-1)^{-c_{\mathcal{H}}} \sum_{F \in F_{\mathcal{H}}^{(A,D)}} (-1)^{\dim F} \\
& = (-1)^{c_{\mathcal{H}}} \sum_{F \in f\big(F_{\mathcal{H}}^{(A,D)}\big)} (-1)^{\dim F} \\
& = (-1)^{c_{\mathcal{H}}} \chi\big(\overline{f(D)}\big) \\
& = 0.
\end{align*}
\item The case $\epsilon_{\mathcal{H}_A}(C) \notin \big\{\epsilon_{\mathcal{H}_A}(D), \epsilon_{\mathcal{H}_A}(A\tilde{D})\big\}$ remains. Assume that $\epsilon_{\mathcal{H}_A}(D) = (+, \dots, +)$, and define the pseudohyperplane arrangement $\mathcal{H}_A(C) := \big\{H_i \in \mathcal{H}_A\ \big|\ \epsilon_i(C) = -\big\}$. Moreover, if $E \in C_{\mathcal{H}_A(C)}$, let  $F_{\mathcal{H}_A(C)}^{(d-1,E)} := \{F \in F_{\mathcal{H}_A(C)}\ |\ F \preceq E,\, \dim F = d-1\}$. If $\#\mathcal{H}_A(C) > 1$, then $$\forall F \in F_{\mathcal{H}_A(C)}^{(d-1,E)},\, \exists F' \in F_{\mathcal{H}_A(C)}^{(d-1,E)} \setminus \{F\}:\ \dim \mathrm{int}(\overline{F} \cap \overline{F'}) = d-2.$$
We obtain,
\begin{align*}
\sum_{\substack{F \in F_{\mathcal{H}}^{(A,D)} \\ FC = D}} (-1)^{\mathrm{rk}\,F} & = (-1)^{- c_{\mathcal{H}}} \sum_{\substack{F \in F_{\mathcal{H}}^{(A,D)} \\ \epsilon_{\mathcal{H}_A(C)}(F) \, = \,  (+, \dots, +)}} (-1)^{\dim F} \\
& = (-1)^{c_{\mathcal{H}}} \sum_{\substack{F \in f\big(F_{\mathcal{H}}^{(A,D)}\big) \\ \epsilon_{\mathcal{H}_A(C)}(F) \, = \,  (+, \dots, +)}} (-1)^{\dim F} \\
& = (-1)^{c_{\mathcal{H}}} \chi\Big(\overline{f(D)} \setminus \bigcup_{F \in F_{\mathcal{H}_A(C)}^{(d-1,f(D))}} \overline{F}\Big) \\
& = (-1)^{c_{\mathcal{H}}} \Big(\chi\big(\overline{f(D)}\big) - \chi\big( \bigcup_{F \in F_{\mathcal{H}_A(C)}^{(d-1,f(D))}} \overline{F} \big)\Big) \\
& = 0.
\end{align*}
\end{itemize}
Hence $\displaystyle \sum_{F \in F_{\mathcal{H}}^{(A,D)}} (-1)^{\mathrm{rk}\,F} \sum_{\substack{C \in C_{\mathcal{H}} \\ FC = D}} x_C \, = \, (-1)^{\mathrm{rk}\,D} \sum_{\substack{C \in C_{\mathcal{H}} \\ \epsilon_{\mathcal{H}_A}(C) \, = \, \epsilon_{\mathcal{H}_A}(A\tilde{D})}} x_C \, = \, (-1)^{\mathrm{rk}\,D} \sum_{\substack{C \in C_{\mathcal{H}} \\ AC = A\tilde{D}}} x_C$.
\end{proof}

\begin{lemma}  \label{cfd}
Let $\mathcal{H}$ be a pseudohyperplane arrangement in $\mathbb{R}^d$, $C, D \in C_{\mathcal{H}}$, and $F \in F_{\mathcal{H}}$ such that $F \preceq C$. Then, $$v(C,D) = v(C,FD) \, v(FD,D).$$
\end{lemma}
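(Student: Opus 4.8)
The plan is to work entirely with the sign vectors, using the factorization property of the separation set $\mathrm{S}$. Write $u = \epsilon_{\mathcal{H}}(C)$, $w = \epsilon_{\mathcal{H}}(D)$, and let $t = \epsilon_{\mathcal{H}}(F) \leq u$ be the covector of $F$. Since $F \preceq C$, we have $t_i \in \{0, u_i\}$ for each $i$. The covector of $FD$ is $t \circ w$, so its $i$-th coordinate equals $t_i$ when $t_i \neq 0$ (equivalently $u_i$ on the support of $t$) and $w_i$ otherwise. First I would unwind the three distances: $\mathrm{v}(C,D) = \prod_{i \in \mathrm{S}(u,w)} a_i^{u_i}$, $\mathrm{v}(C, FD) = \prod_{i \in \mathrm{S}(u, t\circ w)} a_i^{u_i}$, and $\mathrm{v}(FD, D) = \prod_{i \in \mathrm{S}(t\circ w, w)} a_i^{(t\circ w)_i}$, with the usual convention that an empty product is $1$ (covering the degenerate cases $C = D$, $FD = C$, or $FD = D$).

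The heart of the argument is a disjoint-union decomposition of the index set. I claim $\mathrm{S}(u,w) = \mathrm{S}(u, t\circ w) \sqcup \mathrm{S}(t\circ w, w)$, and moreover that on $\mathrm{S}(t\circ w, w)$ one has $(t \circ w)_i = u_i$, so that the exponents match up to give exactly $\mathrm{v}(C,D) = \mathrm{v}(C,FD)\,\mathrm{v}(FD,D)$. To see the decomposition, fix $i \in [n]$ and split on whether $t_i = 0$. If $t_i = 0$, then $(t\circ w)_i = w_i$; hence $i \notin \mathrm{S}(t \circ w, w)$, and $i \in \mathrm{S}(u, t\circ w)$ iff $u_i = -w_i \neq 0$ iff $i \in \mathrm{S}(u,w)$. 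If $t_i \neq 0$, then $t_i = u_i$ (because $t \leq u$) and $(t\circ w)_i = u_i$, so $i \notin \mathrm{S}(u, t\circ w)$; and $i \in \mathrm{S}(t\circ w, w) = \mathrm{S}(u \text{ on this coord}, w)$ iff $u_i = -w_i \neq 0$ iff $i \in \mathrm{S}(u,w)$. In both cases $i$ lies in $\mathrm{S}(u,w)$ precisely when it lies in exactly one of the two sets on the right, which is the claimed partition; and in the second case, whenever $i$ contributes to $\mathrm{S}(t\circ w, w)$ we indeed have $(t\circ w)_i = u_i$. Multiplying the monomials then gives the identity, since the product over $\mathrm{S}(u,w)$ of $a_i^{u_i}$ splits as the product over $\mathrm{S}(u, t\circ w)$ of $a_i^{u_i}$ times the product over $\mathrm{S}(t\circ w, w)$ of $a_i^{(t\circ w)_i}$.

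I do not expect a serious obstacle here; this is a purely combinatorial verification about sign vectors, and the only thing to be careful about is bookkeeping the $C = FD$ or $FD = D$ boundary cases so the convention $\mathrm{v}(v,v) = 1$ is consistently applied (both sides then reduce correctly, since an empty separation set forces the corresponding factor to be $1$). One should also note that $FD$ is genuinely a chamber: its covector $t \circ w$ has no zero coordinate because $w$ has none, so the statement is well-posed. The mild subtlety worth spelling out is why $(t\circ w)_i = u_i$ on $\mathrm{S}(t\circ w, w)$ — it is exactly the observation that such $i$ must have $t_i \neq 0$ (otherwise $(t\circ w)_i = w_i$ and $i \notin \mathrm{S}(t\circ w,w)$), whence $t_i = u_i$ by $F \preceq C$ — and this is precisely what makes the exponent in $\mathrm{v}(FD,D)$ agree with the exponent $u_i$ appearing in $\mathrm{v}(C,D)$.
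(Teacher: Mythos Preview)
Your proof is correct and follows essentially the same approach as the paper: both arguments split on whether $i$ lies in the zero set of $\epsilon_{\mathcal{H}}(F)$ (equivalently, whether $H_i \in \mathcal{H}_F$) to obtain the disjoint decomposition of the separating set, and both observe that on the $\mathrm{S}(FD,D)$ part the sign $(t\circ w)_i$ agrees with $u_i$ so the exponents match. The only cosmetic difference is that the paper packages the indices as sets of half-spaces $\mathscr{H}_{C,D}$ rather than working directly with $\mathrm{S}(u,w)$.
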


\begin{proof}
Assume that $\mathcal{H} = \{H_i\}_{i \in [n]}$, and for any chambers $F,G \in C_{\mathcal{H}}$, denote the set of half-spaces containing $F$ but not $G$ by $\mathscr{H}_{F,G} := \big\{H_i^{\epsilon_i(F)}\ \big|\ i \in [n]:\, \epsilon_i(F) \neq \epsilon_i(G)\big\}$. We have
$$\mathscr{H}_{C,FD} = \big\{H_i^{\epsilon_i(C)}\ \big|\ H_i \in \mathcal{A}_F,\, \epsilon_i(C) \neq \epsilon_i(FD)\big\} = \big\{H_i^{\epsilon_i(C)}\ \big|\ H_i \in \mathcal{A}_F,\, \epsilon_i(C) \neq \epsilon_i(D)\big\},$$
and also
$$\mathscr{H}_{FD,D} = \big\{H_i^{\epsilon_i(FD)}\ \big|\ H_i \in \mathcal{A} \setminus \mathcal{A}_F,\, \epsilon_i(FD) \neq \epsilon_i(D)\big\} = \big\{H_i^{\epsilon_i(C)}\ \big|\ H_i \in \mathcal{A} \setminus \mathcal{A}_F,\, \epsilon_i(C) \neq \epsilon_i(D)\big\}.$$
Then, $\mathscr{H}_{C,D} = \mathscr{H}_{C,FD} \sqcup \mathscr{H}_{FD,D}$.
\end{proof}

\noindent The module of $R_{\mathcal{H}}$-linear combinations of chambers in $C_{\mathcal{H}}$ is
$\displaystyle M_{\mathcal{H}} := \Big\{\sum_{C \in C_{\mathcal{H}}} x_C C\ \Big|\ x_C \in R_{\mathcal{H}}\Big\}$.

\noindent Let $\{C^*\}_{C \in C_{\mathcal{H}}}$ be the dual basis of the basis $C_{\mathcal{H}}$. Define the linear map $\gamma_{\mathcal{H}}: M_{\mathcal{H}} \rightarrow M_{\mathcal{H}}^*$, for every $D \in C_{\mathcal{H}}$, by
$$\gamma_{\mathcal{H}}(D) := \sum_{C \in C_{\mathcal{H}}} v(D,C)\, C^*.$$

\noindent Let $B_{\mathcal{H}}$ be the extension ring $\displaystyle \bigg\{\frac{p}{\displaystyle \prod_{F \in F_{\mathcal{H}}  \setminus C_{\mathcal{H}}} (1 - b_F)^{k_F}}\ \bigg|\ p \in R_{\mathcal{H}},\, k_F \in \mathbb{N}\bigg\}$ of $R_{\mathcal{H}}$.

\begin{proposition} \label{D*}
Let $\mathcal{H}$ be a pseudohyperplane arrangement in $\mathbb{R}^d$, and $D \in C_{\mathcal{H}}$. Then,
$$D^* = \sum_{C \in C_{\mathcal{H}}} x_C \, \gamma_{\mathcal{H}}(C) \quad \text{with} \quad x_C \in B_{\mathcal{H}}.$$
\end{proposition}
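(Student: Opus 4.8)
The plan is to show that the family $\{\gamma_{\mathcal{H}}(C)\}_{C \in C_{\mathcal{H}}}$ spans $M_{\mathcal{H}}^*$ over $B_{\mathcal{H}}$, which — since the cardinalities match — amounts to inverting the matrix $\big(v(D,C)\big)_{D,C}$ over $B_{\mathcal{H}}$. The natural strategy, following Aguiar and Mahajan, is to produce an explicit expansion of each $D^*$ by an induction on $\mathrm{rk}\,D$, or equivalently on the codimension of $D$ inside the arrangement, but the cleanest route here is to exploit Proposition~\ref{Witt} directly. First I would fix a chamber $D$ and, for a nested face $(A,D)$ with $A$ of minimal rank (so $A$ is the central face, $\epsilon_{\mathcal{H}}(A) = \epsilon_{\mathcal{H}_A}$-constant on the lineality), write down the element $\sum_{F \in F_{\mathcal{H}}^{(A,D)}} (-1)^{\mathrm{rk}\,F}\, \gamma_{\mathcal{H}}(FD)$ — or rather the corresponding combination of the $\gamma$'s indexed by chambers $FC$ — and apply Proposition~\ref{Witt} coordinatewise in the dual basis $\{C^*\}$.

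The key computation is as follows. For a chamber $C$, the coefficient of $C^*$ in $\sum_{F} (-1)^{\mathrm{rk}\,F}\gamma_{\mathcal{H}}(FD)$ is $\sum_{F \in F_{\mathcal{H}}^{(A,D)}} (-1)^{\mathrm{rk}\,F} v(FD,C)$. Using Lemma~\ref{cfd} with the face $F \preceq FD$ — more precisely, I would first massage $v(FD,C)$ into a product $v(FD, F\!\cdot\!(\text{something}))\cdot v(\dots,C)$ so that the part depending on $F$ factors out as a monomial in the $a_i^{\pm}$ attached to the pseudohyperplanes through $A$ but not through $D$ — one sees that the $F$-dependence collapses exactly into the sum treated by Proposition~\ref{Witt} with the placeholder variables $x_C$ specialized to these Varchenko monomials. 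This produces a closed expression: the sum telescopes to $(-1)^{\mathrm{rk}\,D}$ times a single term involving $v(\tilde D, C)$ (the opposite chamber) up to a weight factor, all other terms cancelling. Dividing through by the resulting factor, which is a unit in $B_{\mathcal{H}}$ because it has the form $\prod (1-b_F)$ over the faces $F$ with $A \preceq F \prec D$, expresses $\tilde D^*$ — hence, replacing $D$ by $\tilde D$, also $D^*$ — as a $B_{\mathcal{H}}$-linear combination of the $\gamma_{\mathcal{H}}(C)$.

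I expect the main obstacle to be bookkeeping: matching the abstract identity of Proposition~\ref{Witt}, whose left-hand side runs over faces $F$ with $FC = D$, against the Varchenko weights $v(FD,C)$, which requires identifying precisely when $FC = D$ in terms of the sign vectors and verifying that the monomial $v(FD,C)/v(D',C)$ is independent of $C$ within each fiber of the map $C \mapsto \epsilon_{\mathcal{H}_A}(C)$. Concretely, one must check that $\mathrm{S}\big(\epsilon(FD),\epsilon(C)\big)$ decomposes, via Lemma~\ref{cfd} applied to $F$, into a part contributing the $F$-dependent monomial in the variables indexed by $\mathcal{H}_A$ and a part that only sees $\epsilon_{\mathcal{H}\setminus\mathcal{H}_A}(C)$. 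Once that factorization is in place, Proposition~\ref{Witt} does the cancellation for free, and identifying the leftover scalar as a product of $(1-b_F)$'s — hence a unit in $B_{\mathcal{H}}$ — is routine. A secondary point to be careful about is the base case $D$ itself a chamber with $A = $ the minimal face: there the sum is a single term and the statement is the tautology $D^* = v(D,D)^{-1}\gamma$-adjustment, which is consistent since $v(D,D)=1$.
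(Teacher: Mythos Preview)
Your proposal has a genuine gap at the very first step. For any face $F$ with $F \preceq D$ one has $FD = D$, since $\epsilon_{\mathcal{H}}(F) \circ \epsilon_{\mathcal{H}}(D) = \epsilon_{\mathcal{H}}(D)$ whenever $\epsilon_i(F) \in \{0,\epsilon_i(D)\}$ for all $i$. Consequently the element you propose to study,
\[
\sum_{F \in F_{\mathcal{H}}^{(A,D)}} (-1)^{\mathrm{rk}\,F}\, \gamma_{\mathcal{H}}(FD),
\]
is simply $\bigl(\sum_F (-1)^{\mathrm{rk}\,F}\bigr)\,\gamma_{\mathcal{H}}(D)$, a scalar multiple of a single $\gamma_{\mathcal{H}}(D)$, and the coefficient of $C^*$ you write down, $\sum_F (-1)^{\mathrm{rk}\,F} v(FD,C)$, carries no information about $C$ beyond $v(D,C)$. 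No massaging via Lemma~\ref{cfd} will help, because there is nothing $F$-dependent left to factor.

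What the paper does instead, and what your sketch is missing, is the introduction of the auxiliary elements
\[
m(A,D) \; := \; \sum_{\substack{C \in C_{\mathcal{H}} \\ AC = D}} v(D,C)\, C^* ,
\]
which interpolate between $\gamma_{\mathcal{H}}(D) = m(D,D)$ and $D^* = m(O,D)$ where $O$ is the minimal face. The argument is a \emph{backward induction on $\mathrm{rk}\,A$}, not on anything attached to $D$ (your remark about ``induction on $\mathrm{rk}\,D$'' is off, since $D$ is a chamber of fixed maximal rank). Proposition~\ref{Witt}, applied with $x_C = v(D,C)\,C^*$ together with Lemma~\ref{cfd}, does not produce a closed expression for $m(A,D)$ alone; it gives a single linear relation
\[
m(A,D) \; - \; (-1)^{\mathrm{rk}\,D - \mathrm{rk}\,A}\, v(D,A\tilde D)\, m(A,A\tilde D) \; = \; (\text{terms } m(F,D),\ F \succ A),
\]
which couples $m(A,D)$ to its ``opposite'' $m(A,A\tilde D)$. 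One must then write the companion relation with $D$ replaced by $A\tilde D$ and solve the resulting $2\times 2$ system; the determinant of that system is $1 - v(D,A\tilde D)\,v(A\tilde D,D) = 1 - b_A$, and \emph{this} is where the single factor $1 - b_A$ enters the denominator at each inductive step. Your claim that one divides by a product $\prod_{A \preceq F \prec D}(1-b_F)$ in one stroke is not how the factors arise; they accumulate one at a time through the induction.
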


\begin{proof}
The proof is similar to the proof of \cite[Proposition~8.13]{AgMa}. For a nested face $(A,D)$, let
$$\displaystyle m(A,D) := \sum_{\substack{C \in C_{\mathcal{H}} \\ AC = D}} v(D,C)\, C^* \in M_{\mathcal{H}}^*.$$
One proves by backward induction that $\displaystyle m(A,D) = \sum_{C \in C_{\mathcal{H}}} x_C \, \gamma_{\mathcal{H}}(C)$ with $x_C \in B_{\mathcal{H}}$. We obviously have $m(D,D) = \gamma_{\mathcal{H}}(D)$. Then, Proposition~\ref{Witt} applied to $x_C = v(D,C)\, C^*$ in addition to Lemma~\ref{cfd} yield
$$\sum_{F \in F_{\mathcal{H}}^{(A,D)}} (-1)^{\mathrm{rk}\,F} m(F,D) = (-1)^{\mathrm{rk}\,D} \sum_{\substack{C \in C_{\mathcal{H}} \\ AC = A\tilde{D}}} v(D,C)\, C^* = (-1)^{\mathrm{rk}\,D} \, v(D, A\tilde{D}) \, m(A, A\tilde{D}).$$
Hence, $\displaystyle m(A,D) - (-1)^{\mathrm{rk}\,D - \mathrm{rk}\,A} \, v(D, A\tilde{D}) \, m(A, A\tilde{D}) = \sum_{F \in F_{\mathcal{H}}^{(A,D)} \setminus \{A\}} (-1)^{\mathrm{rk}\,F - \mathrm{rk}\,A +1} m(F,D)$.
By induction hypothesis, for every $C \in C_{\mathcal{H}}$, there exists $a_C \in B_{\mathcal{H}}$, such that $$\sum_{F \in F_{\mathcal{H}}^{(A,D)} \setminus \{A\}} (-1)^{\mathrm{rk}\,F - \mathrm{rk}\,A +1} m(F,D) = \sum_{C \in C_{\mathcal{H}}} a_C \, \gamma_{\mathcal{H}}(C).$$
Since $A \preceq A\tilde{D}$ and $A(\widetilde{A\tilde{D}}) = D$, by replacing $D$ with $A\tilde{D}$, there exists also $e_C \in B_{\mathcal{H}}$ for every $C \in C_{\mathcal{H}}$ such that $\displaystyle m(A, A\tilde{D}) - (-1)^{\mathrm{rk}\,A\tilde{D} - \mathrm{rk}\,A} \, v(A\tilde{D}, D) \, m(A,D) = \sum_{C \in C_{\mathcal{H}}} e_C \, \gamma_{\mathcal{H}}(C)$.
Therefore, $$m(A,D) = \sum_{C \in C_{\mathcal{H}}} \frac{a_C + (-1)^{\mathrm{rk}\,D - \mathrm{rk}\,A} \, v(D, A\tilde{D}) \, e_C}{1 - b_A} \gamma_{\mathcal{H}}(C).$$
Hence, $\displaystyle D^* = m\Big(\bigcap_{H \in \mathcal{H}}H,\, D\Big) = \sum_{C \in C_{\mathcal{H}}} x_C \, \gamma_{\mathcal{H}}(C)$ with $x_C \in B_{\mathcal{H}}$.
\end{proof}

\section{Determinant Computing}  \label{SeVa}

\noindent We finally compute the Varchenko determinant of a pseudohyperplane arrangement by first proving that it has the form $\displaystyle \prod_{F \in F_{\mathcal{H}} \setminus C_{\mathcal{H}}} (1 - b_F)^{l_F}$, and by determining $l_F$ for each face $F$. The \textbf{Varchenko matrix} of a pseudohyperplane arrangement $\mathcal{H}$ is $\mathbf{M}_{\mathcal{H}} := \big(v(D,C)\big)_{C,D \in C_{\mathcal{H}}}$.

\begin{proposition} \label{cen}
Let $\mathcal{H}$ be a pseudohyperplane arrangement in $\mathbb{R}^d$. For a face $F \in F_{\mathcal{H}} \setminus C_{\mathcal{H}}$, there exists a nonnegative integer $l_F$ such that $$\det \mathbf{M}_{\mathcal{H}} = \prod_{F \in F_{\mathcal{H}} \setminus C_{\mathcal{H}}} (1 - b_F)^{l_F}.$$ 
\end{proposition}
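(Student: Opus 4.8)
The plan is to use Proposition~\ref{D*} to show that the Varchenko matrix becomes invertible once the polynomials $1 - b_F$ are inverted, and then to convert ``unit of $B_{\mathcal{H}}$'' into the asserted product shape via unique factorization in $R_{\mathcal{H}}$. This follows the strategy of Aguiar and Mahajan for central hyperplane arrangements.

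First I would record that, with respect to the basis $C_{\mathcal{H}}$ of $M_{\mathcal{H}}$ and the dual basis $\{C^*\}_{C \in C_{\mathcal{H}}}$ of $M_{\mathcal{H}}^*$, the map $\gamma_{\mathcal{H}}$ is represented by the matrix $\big(v(D,C)\big)_{C,D \in C_{\mathcal{H}}} = \mathbf{M}_{\mathcal{H}}$ up to transposition, so $\det \gamma_{\mathcal{H}} = \det \mathbf{M}_{\mathcal{H}} = V_{\mathcal{H}}$. Extending scalars from $R_{\mathcal{H}}$ to $B_{\mathcal{H}}$, Proposition~\ref{D*} says that every $D^*$ lies in the $B_{\mathcal{H}}$-span of $\{\gamma_{\mathcal{H}}(C)\}_{C \in C_{\mathcal{H}}}$; hence the base-changed map is a surjection between two free $B_{\mathcal{H}}$-modules of the same finite rank $\#C_{\mathcal{H}}$. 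Picking, for each $D$, coefficients $x_C \in B_{\mathcal{H}}$ with $D^* = \sum_C x_C\, \gamma_{\mathcal{H}}(C)$ assembles a square matrix $Y$ over $B_{\mathcal{H}}$ with $\mathbf{M}_{\mathcal{H}} Y = I$ (after accounting for the transposition), so $\det \mathbf{M}_{\mathcal{H}} \cdot \det Y = 1$ and $\det \mathbf{M}_{\mathcal{H}} \in B_{\mathcal{H}}^{\times}$.

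Next I would convert this into the statement. On one side $\det \mathbf{M}_{\mathcal{H}} \in R_{\mathcal{H}}$, being the determinant of a matrix over $R_{\mathcal{H}}$; on the other, $B_{\mathcal{H}} = S^{-1}R_{\mathcal{H}}$ is the localization of $R_{\mathcal{H}}$ at the multiplicative set $S$ generated by $\{1 - b_F\ |\ F \in F_{\mathcal{H}} \setminus C_{\mathcal{H}}\}$, so an element of $R_{\mathcal{H}}$ invertible in $S^{-1}R_{\mathcal{H}}$ must divide, in $R_{\mathcal{H}}$, a product $\prod_F (1 - b_F)^{N_F}$. Now $R_{\mathcal{H}} = \mathbb{Z}\big[a_i^+, a_i^-\ |\ i \in [n]\big]$ is a UFD, and for each $F \in F_{\mathcal{H}} \setminus C_{\mathcal{H}}$ the set $Z_F := \{i \in [n]\ |\ \epsilon_i(F) = 0\}$ is nonempty, so $1 - b_F = 1 - \prod_{i \in Z_F} a_i^+ a_i^-$ is, viewed in any single variable $a_j^{\pm}$ with $j \in Z_F$, a primitive polynomial of degree one, hence irreducible; moreover two of these are associate exactly when the monomials $b_F$ agree. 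Reading off the multiplicities of irreducible factors therefore yields $\det \mathbf{M}_{\mathcal{H}} = c \prod_{F \in F_{\mathcal{H}} \setminus C_{\mathcal{H}}} (1 - b_F)^{l_F}$ with $l_F \in \mathbb{N}$ (combining faces of equal weight) and $c$ a unit of $R_{\mathcal{H}}$, i.e. $c = \pm 1$. To fix $c = 1$, I would specialize every $a_i^{\pm}$ to $0$: then $\mathbf{M}_{\mathcal{H}}$ becomes the identity matrix (its diagonal entries are $1$, and for $C \neq D$ the set $\mathrm{S}\big(\epsilon_{\mathcal{H}}(D), \epsilon_{\mathcal{H}}(C)\big)$ is nonempty, so $v(D,C)$ is a nontrivial monomial), while each $1 - b_F$ becomes $1$; comparing determinants forces $c = 1$.

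The step I expect to be the main obstacle is the passage from Proposition~\ref{D*} to invertibility of $\det \mathbf{M}_{\mathcal{H}}$ over $B_{\mathcal{H}}$: one must argue carefully that a surjection of free modules of equal finite rank over a commutative ring yields a one-sided matrix inverse, and that a one-sided inverse over a commutative ring forces the determinant to be a unit, while keeping the transpose and the orientation of the two bases straight throughout. By contrast, the irreducibility of the polynomials $1 - b_F$ and the normalization of the sign are routine.
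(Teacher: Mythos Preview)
Your proposal is correct and follows essentially the same route as the paper: use Proposition~\ref{D*} to see that $\det \mathbf{M}_{\mathcal{H}}$ is a unit of $B_{\mathcal{H}}$, then factor in the UFD $R_{\mathcal{H}}$ and normalize the unit via the constant term (specialization $a_i^{\pm}=0$). The only cosmetic difference is that the paper first observes the constant term is $1$ to get two-sided invertibility over the fraction field and then reads the entries of $\mathbf{M}_{\mathcal{H}}^{-1}$ as your $x_C \in B_{\mathcal{H}}$ via the adjugate formula, whereas you go directly from the one-sided identity $\mathbf{M}_{\mathcal{H}}Y=I$ over $B_{\mathcal{H}}$ to $\det \mathbf{M}_{\mathcal{H}}\in B_{\mathcal{H}}^{\times}$; your version also makes explicit the irreducibility of each $1-b_F$, which the paper leaves implicit.
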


\begin{proof}
It is clear that $\mathbf{M}_{\mathcal{H}}$ is the matrix representation of $\gamma_{\mathcal{H}}$. The determinant of $\mathbf{M}_{\mathcal{H}}$ is a polynomial in $R_{\mathcal{H}}$ with constant term $1$, so $\mathbf{M}_{\mathcal{H}}$ is invertible. From Proposition~\ref{D*}, we know that, for every chamber $D \in C_{\mathcal{H}}$, there exist $x_C \in B_{\mathcal{H}}$ such that $\displaystyle D^* = \sum_{C \in C_{\mathcal{H}}} x_C \gamma_{\mathcal{H}}(C)$. Hence, $$\gamma_{\mathcal{H}}^{-1}(D^*) = \sum_{C \in C_{\mathcal{H}}} x_C \, C \quad \text{with} \quad x_C \in B_{\mathcal{H}}.$$
As the matrix representation of $\gamma_{\mathcal{H}}^{-1}$ is $\mathbf{M}_{\mathcal{H}}^{-1}$, each entry of $\mathbf{M}_{\mathcal{H}}^{-1}$ is then an element of $B_{\mathcal{H}}$. To finish, note that $\displaystyle \mathbf{M}_{\mathcal{H}}^{-1} = \frac{\mathrm{adj}(\mathbf{M}_{\mathcal{H}})}{\det \mathbf{M}_{\mathcal{H}}}$, where each entry of $\mathrm{adj}(\mathbf{M}_{\mathcal{H}})$ is a polynomial in $R_{\mathcal{H}}$. Then, the only possibility is $\det \mathbf{M}_{\mathcal{H}}$ has the form $\displaystyle k \prod_{F \in F_{\mathcal{H}} \setminus C_{\mathcal{H}}} (1 - b_F)^{l_F}$, with $k \in \mathbb{Z}$. As the constant term of $\det \mathbf{M}_{\mathcal{H}}$ is $1$, we deduce that $k=1$.
\end{proof}

\noindent Let $\mathcal{K}$ be a subset of a pseudohyperplane arrangement $\mathcal{H}$. Define an \textbf{apartment} of $\mathcal{H}$ to be a chamber of $\mathcal{K}$. Denote the set formed by the apartments of $\mathcal{H}$ by $K_{\mathcal{H}}$. The \textbf{restriction} of $\mathcal{H}$ to $K \in K_{\mathcal{H}}$ is the pseudohyperplane arrangement $\mathcal{H}^K = \{H_i \in \mathcal{H}\ |\ H_i \cap K \neq \emptyset\}$. The sets formed by the faces and the chambers in $K$ are $F_{\mathcal{H}}^K := \{F \in F_{\mathcal{H}}\ |\ F \subseteq K\}$ and $C_{\mathcal{H}}^K := C_{\mathcal{H}} \cap F_{\mathcal{H}}^K$ respectively. Moreover, the Varchenko matrix of $\mathcal{H}$ restricted to $C_{\mathcal{H}}^K$ is
$$\mathbf{M}_{\mathcal{H}}^K := \big(v(D,C)\big)_{C,D \in C_{\mathcal{H}}^K}.$$

\begin{corollary} 
Let $\mathcal{H}$ a pseudohyperplane arrangement in $\mathbb{R}^d$, and $K \in K_{\mathcal{H}}$. Then, $$\det \mathbf{M}_{\mathcal{H}}^K = \prod_{F \in F_{\mathcal{H}}^K \setminus C_{\mathcal{H}}^K} (1- b_F)^{l_F}.$$ 
\end{corollary}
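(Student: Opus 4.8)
The plan is to recognize $\mathbf{M}_{\mathcal{H}}^{K}$ as the Varchenko matrix of a genuine pseudohyperplane arrangement in $\mathbb{R}^{d}$ — the trace of $\mathcal{H}$ on the apartment $K$ — and then simply invoke Proposition~\ref{cen}. Let $\mathcal{K}$ be the subarrangement of $\mathcal{H}$ of which $K$ is a chamber. Since the faces of $\mathcal{H}$ refine those of $\mathcal{K}$, each $F \in F_{\mathcal{H}}$ lies inside a single face of $\mathcal{K}$, so $F$ is either contained in the open chamber $K$ or disjoint from it; hence $F_{\mathcal{H}}^{K}$ is exactly the set of faces of $\mathcal{H}$ inside $K$, and $C_{\mathcal{H}}^{K}$ the chambers among them. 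As $K$ is a chamber of a pseudohyperplane arrangement, $\overline{K}$ is a $d$-ball and $K$ is homeomorphic to $\mathbb{R}^{d}$. A pseudohyperplane $H_{i} \in \mathcal{H}$ meets $K$ precisely when $H_{i} \in \mathcal{H}^{K}$, and for such $i$ the set $H_{i} \cap K$ splits $K$ into $H_{i}^{+} \cap K$ and $H_{i}^{-} \cap K$. I would then check that $\mathcal{H}' := \{H_{i} \cap K \mid H_{i} \in \mathcal{H}^{K}\}$, indexed by $J := \{i \in [n] \mid H_{i} \in \mathcal{H}^{K}\}$, is a pseudohyperplane arrangement in $K \cong \mathbb{R}^{d}$ whose face poset, chamber set, and sign functions $\epsilon'_{i}$ are the restrictions to $K$ of those of $\mathcal{H}$; one verifies the homeomorphism and pseudosphere conditions of the definition directly, or, equivalently, one observes that the sign vectors $\{\epsilon_{\mathcal{H}}(F)|_{J} \mid F \in F_{\mathcal{H}}^{K}\}$ form a loop-free oriented matroid (a minor of $\mathsf{M}_{\mathcal{H}}$) and realizes it via Corollary~\ref{CoPs}.

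Next I would transfer the combinatorial data. For $C, D \in C_{\mathcal{H}}^{K}$, any index $i$ with $\epsilon_{i}(C) \neq \epsilon_{i}(D)$ forces $H_{i}$ to meet $K$, hence $i \in J$; assigning to $H_{i} \cap K$ the same variables $a_{i}^{+}, a_{i}^{-}$ as to $H_{i}$, the Aguiar--Mahajan distance of $\mathcal{H}'$ agrees with that of $\mathcal{H}$ on $C_{\mathcal{H}}^{K} \times C_{\mathcal{H}}^{K}$, so $\mathbf{M}_{\mathcal{H}}^{K} = \mathbf{M}_{\mathcal{H}'}$. Likewise, for $F \in F_{\mathcal{H}}^{K}$ every $i$ with $\epsilon_{i}(F) = 0$ has $H_{i}$ passing through $F \subseteq K$, so $i \in J$, whence the weight $b_{F}$ is the same computed in $\mathcal{H}$ or in $\mathcal{H}'$; moreover $F_{\mathcal{H}}^{K} \setminus C_{\mathcal{H}}^{K}$ is precisely the set of non-chamber faces of $\mathcal{H}'$.

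Finally, Proposition~\ref{cen} applied to $\mathcal{H}'$ furnishes nonnegative integers $l_{F}$, one for each $F \in F_{\mathcal{H}}^{K} \setminus C_{\mathcal{H}}^{K}$, with $\det \mathbf{M}_{\mathcal{H}'} = \prod_{F}(1 - b_{F})^{l_{F}}$; combined with $\mathbf{M}_{\mathcal{H}}^{K} = \mathbf{M}_{\mathcal{H}'}$ this gives the stated identity (with $l_F$ the exponents so produced, consistent with the notation of Proposition~\ref{cen}). I expect the only genuine obstacle to be the verification in the first step that the trace of a pseudohyperplane arrangement on an apartment is again a pseudohyperplane arrangement with face poset $(F_{\mathcal{H}}^{K}, \preceq)$ — that is, that restriction to $K$ is compatible with the topological axioms — the distance and weight matching being then routine bookkeeping.
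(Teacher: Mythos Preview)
Your plan founders exactly where you suspect: the trace $\mathcal{H}'=\{H_i\cap K\}_{i\in J}$ is \emph{not} a pseudohyperplane arrangement in this paper's sense, and the sign vectors $\{\epsilon_{\mathcal{H}}(F)|_{J} : F\in F_{\mathcal{H}}^{K}\}$ do \emph{not} form an oriented matroid. The arrangements of this paper are always central---their pseudosphere sections are required to be centrally symmetric---so the covector set always contains $(0,\dots,0)$ and the face poset has a global minimum $\bigcap_i H_i$. In general $(F_{\mathcal{H}}^{K},\preceq)$ has no minimum: take three distinct lines $H_1,H_2,H_3$ through the origin in $\mathbb{R}^{2}$, $\mathcal{K}=\{H_1\}$, $K=H_1^{+}$; then $F_{\mathcal{H}}^{K}$ consists of three chambers and two open rays with no common lower bound, and $|C_{\mathcal{H}}^{K}|=3$ is not the chamber count of any central arrangement of two pseudolines. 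What you have written down is neither a deletion nor a contraction of $\mathsf{M}_{\mathcal{H}}$ but precisely a \emph{topal fiber} $\mathsf{M}_{J,u}$, and computing its Varchenko determinant is the very goal of the Corollary (and ultimately of Theorem~\ref{ThMain}). Invoking Proposition~\ref{cen} on $\mathcal{H}'$ is therefore circular.

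The paper circumvents this by keeping the full central arrangement $\mathcal{H}$ and instead specializing $a_i^{+}=a_i^{-}=0$ for every $H_i\in\mathcal{H}\setminus\mathcal{H}^{K}$. Under that specialization $v(C,D)$ vanishes whenever $C$ and $D$ are separated by some $H_i\notin\mathcal{H}^{K}$, so $\mathbf{M}_{\mathcal{H}}$ becomes block-diagonal with $\mathbf{M}_{\mathcal{H}}^{K}$ and its antipodal copy $\mathbf{M}_{\mathcal{H}}^{\tilde K}$ among the blocks. Proposition~\ref{cen} applies unchanged to $\mathcal{H}$, yielding the factored form of the specialized determinant, and the symmetry $\mathbf{M}_{\mathcal{H}}^{K}=\mathbf{M}_{\mathcal{H}}^{\tilde K}$ then isolates $\det\mathbf{M}_{\mathcal{H}}^{K}$ with exponents $l_F$ inherited from the full arrangement.
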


\begin{proof}
Set $a_i^+ = a_i^- = 0$ for every $H_i \in \mathcal{H} \setminus \mathcal{H}^K$. Then, $$v(C,D) \neq 0 \quad \Longleftrightarrow \quad C=D \, \ \text{or} \, \ C,D \in C_{\mathcal{H}}^K \, \ \text{or} \, \ C,D \in C_{\mathcal{H}}^{\tilde{K}}.$$
Hence, for a suitable chamber indexing of the rows and the columns, we obtain $$\mathbf{M}_{\mathcal{H}} = \mathbf{M}_{\mathcal{H}}^K \oplus \mathbf{M}_{\mathcal{H}}^{\tilde{K}} \oplus \mathbf{I},$$ where $\mathbf{I}$ is the identity matrix of order $\#C_{\mathcal{H}} - 2\,\#C_{\mathcal{H}}^K$. Using Proposition~\ref{cen}, we get
\begin{align*}
\det \mathbf{M}_{\mathcal{H}}^K \oplus \mathbf{M}_{\mathcal{H}}^{\tilde{K}} & = \prod_{F \in F_{\mathcal{H}}^K \setminus C_{\mathcal{H}}^K} (1- b_F)^{l_F} \ \prod_{G \in F_{\mathcal{H}}^{\tilde{K}} \setminus C_{\mathcal{H}}^{\tilde{K}}} (1- b_G)^{l_G} \\
& = \prod_{F \in F_{\mathcal{H}}^K \setminus C_{\mathcal{H}}^K} (1- b_F)^{l_F} (1- b_{\tilde{F}})^{l_{\tilde{F}}} \\
& = \prod_{F \in F_{\mathcal{H}}^K \setminus C_{\mathcal{H}}^K} (1- b_F)^{l_F + l_{\tilde{F}}}. 
\end{align*}
Since $\mathbf{M}_{\mathcal{H}}^K = \mathbf{M}_{\mathcal{H}}^{\tilde{K}}$ for a suitable row and column indexing, then $l_F = l_{\tilde{F}}$.
\end{proof}

\noindent For $F \in F_\mathcal{H}$, and $H_i \in \mathcal{H}$ such that $F \subseteq H_i$, let $\displaystyle \beta_F^{H_i} := \frac{\#\{C \in C_{\mathcal{H}}\ |\ \overline{C} \cap H_i = F\}}{2}$.

\begin{theorem} \label{V}
Let $\mathcal{H}$ be a pseudohyperplane arrangement in $\mathbb{R}^d$, and $F \in F_{\mathcal{H}}$. Then, $\beta_F^{H_i}$ has the same value $\beta_F$ for every $H_i \in \mathcal{H}_F$, and $$\det \mathbf{M}_{\mathcal{H}} = \prod_{F \in F_{\mathcal{H}} \setminus C_{\mathcal{H}}} (1 - b_F)^{\beta_F}.$$
\end{theorem}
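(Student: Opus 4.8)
The plan is to establish the claimed determinant by combining Proposition~\ref{cen}, which already gives the product form $\prod_{F} (1-b_F)^{l_F}$, with an explicit computation of each exponent $l_F$ via a suitable localization argument. First I would prove the invariance statement: for a fixed non-chamber face $F$ and two pseudohyperplanes $H_i, H_j \in \mathcal{H}_F$ (i.e. $F \subseteq H_i \cap H_j$), one must show $\beta_F^{H_i} = \beta_F^{H_j}$. The natural tool here is the restriction/apartment machinery just introduced: passing to the rank-$2$ localization at $F$, meaning the arrangement $\mathcal{H}_F$ restricted near $F$, the faces $F'$ with $\overline{F'} \cap H_i = F$ correspond to chambers of a rank-$2$ pseudoline arrangement (a ``pencil'' through $F$), where by a pigeonhole/parity count around the central point $F$ the number of chambers having $H_i$ as their only bounding line equals the number having $H_j$ as their only bounding line. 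This is essentially the two-dimensional picture, and the topological representation theorem guarantees the local structure really is that of a pseudoline arrangement, so the combinatorial count goes through unchanged from the hyperplane case.

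Next I would compute $l_F$ itself. The strategy, following Aguiar--Mahajan, is to specialize variables so as to isolate the contribution of a single face $F$. Concretely, for a fixed $F$ with $\epsilon_i(F) = 0$ for $H_i \in \mathcal{H}_F$, one sets $a_j^+ = a_j^- = 0$ for all $H_j \notin \mathcal{H}_F$ with $H_j$ not separating the two relevant ``halves''; by the Corollary the determinant then block-decomposes over apartments, and iterating the restriction down to the localization at $F$ reduces the problem to computing the Varchenko determinant of a near-pencil arrangement whose only non-chamber face of positive weight is $F$ (and its opposite $\tilde F$). In that reduced situation the matrix is small and structured enough that one can compute its determinant directly — it will be a power of $(1-b_F)$ — and read off that the exponent is exactly $\beta_F^{H_i}$, which by the first part is the well-defined integer $\beta_F$. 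Since $l_F$ is intrinsic to the global determinant (Proposition~\ref{cen}) and the specialization only kills factors $(1-b_G)$ for $G \neq F$, the exponent of $(1-b_F)$ is preserved, giving $l_F = \beta_F$.

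Finally, substituting $l_F = \beta_F$ into the product formula of Proposition~\ref{cen} yields $\det \mathbf{M}_{\mathcal{H}} = \prod_{F \in F_{\mathcal{H}} \setminus C_{\mathcal{H}}} (1-b_F)^{\beta_F}$, which is the assertion. The main obstacle I anticipate is the localization step: one must argue carefully that restricting to the apartment spanned by a minimal set of pseudohyperplanes through $F$ genuinely reduces the multiplicity $l_F$ to that of a rank-$2$ or near-pencil sub-arrangement without changing it, and that the relevant topological local structure (which in the pseudo setting is only guaranteed up to homeomorphism, not linearity) still supports the explicit determinant computation. Controlling how the specialization $a_j^\pm = 0$ interacts with faces $G$ that are comparable to $F$ but distinct from it — ensuring none of their $(1-b_G)$ factors accidentally survives or merges with the $(1-b_F)$ factor — is the delicate bookkeeping that makes the argument work.
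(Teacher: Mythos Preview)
Your overall architecture---start from Proposition~\ref{cen}, then pin down each exponent $l_F$ by a localization/specialization---matches the paper's strategy at the coarsest level, but the mechanism you propose for actually extracting $l_F$ has a gap, and the paper uses a different (and cleaner) device.

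The problem is your claim that after setting $a_j^\pm=0$ for $H_j\notin\mathcal{H}_F$ and restricting to the appropriate apartment, you can iterate down to a situation where the \emph{only} non-chamber face carrying nonzero weight is $F$. This is false as soon as $F$ has codimension at least $2$: every face $G$ with $F\prec G$ satisfies $\mathcal{H}_G\subseteq\mathcal{H}_F$, so $b_G$ is a subproduct of $b_F$ and is unaffected by your specialization. You cannot kill those $(1-b_G)$ factors without also killing $(1-b_F)$, so a ``near-pencil'' with $F$ as the sole surviving face never materializes. The very bookkeeping worry you flag in your last paragraph is fatal, not merely delicate.

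What the paper does instead is a backward induction on $\dim E$, working in an apartment $K$ chosen so that $E$ is the minimal face of $K$. By induction every strictly larger non-chamber face $G\in F_{\mathcal{H}}^K$ already has exponent $\beta_G$, so the only unknown exponent in $\det\mathbf{M}_{\mathcal{H}}^K$ is $l_E$. The key extraction step is then a \emph{leading-monomial comparison}: one identifies the top-degree term of $\det\mathbf{M}_{\mathcal{H}}^K$ as $\bigl(-\prod_{H_i\in\mathcal{H}_E}a_i^+a_i^-\bigr)^{\#C_{\mathcal{H}}^K/2}$, and equates the exponent of each $a_i^+a_i^-$ on both sides. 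Since the chambers of $K$ are partitioned according to the face $\overline{C}\cap H_i$, one has $\sum_{G\subseteq H_i}\beta_G^{H_i}=\#C_{\mathcal{H}}^K/2$, which immediately gives $l_E=\beta_E^{H_i}$ for every $H_i\in\mathcal{H}_E$. In particular the invariance $\beta_E^{H_i}=\beta_E^{H_j}$ falls out as a corollary of this computation---there is no separate rank-$2$ pigeonhole argument, and none is needed.
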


\begin{proof}
Take a face $E \in F_{\mathcal{H}} \setminus C_{\mathcal{H}}$. There exists an apartment $K \in K_{\mathcal{H}}$ such that $$\displaystyle E = \bigcap_{H_i \in \mathcal{H}^K} H_i \cap K.$$
We prove by backward induction on the dimension of $E$ that $$\forall H_i,H_j \in \mathcal{H}_E:\ \beta_E^{H_i} = \beta_E^{H_j} = \beta_E \quad \text{and} \quad \det \mathbf{M}_{\mathcal{H}}^K = \prod_{F \in F_{\mathcal{H}}^K \setminus C_{\mathcal{H}}^K} (1 - b_F)^{\beta_F}.$$
Remark that $\displaystyle \beta_F^{H_i} = \frac{\#\{C \in C_{\mathcal{H}}^K\ |\ \overline{C} \cap H_i = F\}}{2}$. It is clear that, if $\dim E = d-1$, then
$$\beta_E = 1 \quad \text{and} \quad \det \mathbf{M}_{\mathcal{H}}^K = 1 - b_E.$$
If $\dim E < d-1$, by induction hypothesis, $$\det \mathbf{M}_{\mathcal{H}}^K = (1 - b_E)^{l_E} \, \prod_{F \, \in \, (F_{\mathcal{H}}^K \setminus C_{\mathcal{H}}^K) \setminus \{E\}} (1 - b_F)^{\beta_F}.$$
The leading monomial in $\det \mathbf{M}_{\mathcal{H}}^K$ is $\displaystyle (-1)^{\frac{\#C_{\mathcal{H}}^K}{2}} \prod_{C \in C_{\mathcal{H}}^K} \mathrm{v}(C, E\tilde{C}) = \big(- \prod_{H_i \in \mathcal{H}_E} a_i^+ a_i^- \big)^{\frac{\#C_{\mathcal{H}}^K}{2}}$. Then, comparing the exponent of $a_i^+ a_i^-$, we get $\displaystyle l_E \, = \, \frac{\#C_{\mathcal{H}}^K}{2} - \sum_{\substack{F \, \in \, (F_{\mathcal{H}}^K \setminus C_{\mathcal{H}}^K) \setminus \{E\} \\ F \subseteq H_i}} \beta_F^{H_i} \, = \, \beta_E^{H_i}$.
\end{proof}

\section{Proof of Theorem~\ref{ThMain}}   \label{SeMain}

\noindent From Corollary~\ref{CoPs} and  Theorem~\ref{main}, we deduce that for any oriented matroid $\mathsf{M}$ of rank $d$ included in $\mathbb{E}^n$, there exists a pseudohyperplane arrangement $\mathcal{H} = \{H_i\}_{i \in [n]}$ in $\mathbb{S}^d$ such that $V_{\mathsf{M}} = \det \mathbf{M}_{\mathcal{H}}$. \emph{The corresponding apartment and restriction to a fiber $\mathsf{M}_{I,u}$ are respectively $$K = \epsilon_{\mathcal{H}}^{-1}\Big(\big\{v \in \mathsf{M}\ |\ \forall i \in [n] \setminus I:\, \epsilon_i(v) = \epsilon_i(u)\big\}\Big) \quad \text{and} \quad \mathcal{H}^K = \{H_i\}_{i \in I}$$
which means that $(\mathsf{M}_{I,u}, \leq)$ is poset isomorphic to $(F_{\mathcal{H}}^K, \preceq)$}. Assuming that $a_i^+ = a_i^- = 0$ if $i \in [n] \setminus I$, for a suitable row and column indexing we get $\mathbf{M}_{\mathcal{H}} = \mathbf{M}_{\mathcal{H}}^K \oplus \mathbf{M}_{\mathcal{H}}^K \oplus \mathbf{I}$, where $\mathbf{I}$ is the identity matrix of order $\#C_{\mathcal{H}} - 2\,\#C_{\mathcal{H}}^K$. Since $V_{\mathsf{M}_{I,u}} = \det \mathbf{M}_{\mathcal{H}}^K$, we finally obtain
$$V_{\mathsf{M}_{I,u}} = \prod_{F \in F_{\mathcal{H}}^K \setminus C_{\mathcal{H}}} (1 - b_F)^{2\beta_F} = \prod_{v \in \mathsf{\mathsf{M}_{I,u}} \setminus T_{\mathsf{M}}} (1 - \mathrm{b}_v)^{2\beta_v} = V_{\mathsf{M}_{I,u}}^2.$$
\begin{flushright} $\blacksquare$ \end{flushright}

\bibliographystyle{abbrvnat}

\begin{thebibliography}{50} 

\bibitem{AgMa} M. Aguiar, S. Mahajan, \textit{Topics in Hyperplane Arrangements}, Math. Surveys Monogr. (226) (2017).

\bibitem{BjLaStWhZi} A. Björner, M. Las Vergnas, B. Sturmfels, N. White, G. Ziegler, \textit{Oriented Matroids}, Encyclopedia Math. Appl. (46) $2^{\text{nd}}$ Ed. (1999).

\bibitem{BlLas} R. Bland, M. Las Vergnas, \textit{Orientability of Matroids}, J. Combin. Theory Ser. B (24) (1978), 94--123.

\bibitem{DeHa} G. Denham, P. Hanlon, \textit{Some Algebraic Properties of the Schechtman-Varchenko Bilinear Forms}, New Perspectives in Geometric Combinatorics, MSRI Publications 38, 1999.

\bibitem{De} P. Deshpande, \textit{On Arrangements of Pseudohyperplanes}, Proc. Indian Acad. Sci. Math. Sci. (126) 3 (2016), 399--420.

\bibitem{FoLa} J. Folkman, J. Lawrence, \textit{Oriented Matroids}, J. Combin. Theory Ser. B (25) (1978), 199--236.

\bibitem{GaZh} Y. Gao, Y. Zhang, \textit{Diagonal Form of the Varchenko Matrices}, J. Algebraic Combin. (48) (2018), 351--368.

\bibitem{Ge} R. Gente, \textit{The Varchenko Matrix for Cones}, \href{https://archiv.ub.uni-marburg.de/diss/z2013/0480/}{PhD Thesis}, Philipps-Universität Marburg, 2013.

\bibitem{HaSt} P. Hanlon, R. Stanley, \textit{A $q$-Deformation of a Trivial Symmetric Group Action}, Trans. Amer. Math. Soc. (350) 11 (1998), 4445--4459.

\bibitem{HoWe} W. Hochstättler, V. Welker, \textit{The Varchenko Determinant for Oriented Matroids}, Math. Z. (293) 3-4 (2019), 1415–1430.

\bibitem{OlZh} A. Olzhabayev, Y. Zhang, \textit{Diagonal Form of the Varchenko Matrices for Oriented Matroids}, arXiv: \href{https://arxiv.org/pdf/2001.06460.pdf}{2001.06460}.

\bibitem{Ra} H. Randriamaro, \textit{A Deformed Quon Algebra}, Commun. Math. (27) (2019), 103--112.

\bibitem{Ra2} H. Randriamaro, \textit{A Multiparametric Quon Algebra}, Bull. Iranian. Math. Soc. (2020), DOI: \href{https://doi.org/10.1007/s41980-020-00414-5}{https://doi.org/10.1007/s41980-020-00414-5}.

\bibitem{Ra3} H. Randriamaro, \textit{The Varchenko Determinant for Apartments}, Results Math. (75) 3 (2020) 86.

\bibitem{ScVa} V. Schechtman, A. Varchenko, \textit{Quantum Groups and Homology of Local Systems}, Algebraic Geometry and Analytic Geometry, Springer, 1990.

\bibitem{Va} A. Varchenko, \textit{Bilinear Form of Real Configuration of Hyperplanes}, Adv. Math. (97) 1  (1993), 110--144. 

\bibitem{Zi} G. Ziegler, \textit{Lectures on Polytopes}, Grad. Texts in Math. (152) $7^{\text{th}}$ Pr. (2007).

\end{thebibliography}

\end{document}